\newcommand{\conj}[1]{\iota(#1)}
\newcommand{\op}[1]{\operatorname{#1}}
\newcommand{\hyp}{Let $\ell$ be a prime as in \Cref{assump_degreePolarization}(1). }
\newtheorem{theorem}{Theorem}
\newtheorem{corollary}[theorem]{Corollary}
\newtheorem{lemma}[theorem]{Lemma}
\newtheorem{notation}[theorem]{Notation}
\newtheorem{proposition}[theorem]{Proposition}
\theoremstyle{definition}
\newtheorem{definition}[theorem]{Definition}
\newtheorem{remark}[theorem]{Remark}
\newtheorem{assumption}[theorem]{Assumption}
\numberwithin{theorem}{section}
\newcommand{\abGal}[1]{\operatorname{Gal}\left(\overline{#1}/#1\right)}
\newcommand{\Gq}{G^F}
\newcommand{\threefolds}
\date{}
\begin{document}

\title{Explicit open image theorems for abelian varieties with trivial endomorphism ring}

\author{Matthew Bisatt and Davide Lombardo}

\maketitle
\begin{abstract}
Let $K$ be a number field and $A/K$ be an abelian variety of dimension $g$. Assuming that the image $G_{\ell^\infty}$ of the natural Galois representation attached to the Tate module $T_\ell(A)$ is $\operatorname{GSp}_{2g}(\mathbb{Z}_\ell)$ for all sufficiently large primes $\ell$, we provide a semi-effective bound $\ell_0(A/K)$ such that $G_{\ell^\infty}=\operatorname{GSp}_{2g}(\mathbb{Z}_\ell)$ for all primes $\ell > \ell_0(A/K)$. The bound is given in terms of the Faltings height of $A$ and of the cardinality of the residue field at a suitably generic place of $K$. We also describe an algorithmic approach to obtain better bounds for abelian threefolds over $\mathbb{Q}$.
\end{abstract}

\setcounter{tocdepth}{1}
\tableofcontents

\newpage
\section{Introduction}
Let $K$ be a number field and $A$ be an abelian variety over $K$.
The purpose of the present work is to study the Galois representations attached to $A$ under the assumption that $\operatorname{End}_{\overline{K}}(A)$ is $\mathbb{Z}$. More precisely, we are interested in the family of representations
\[
\rho_{\ell^\infty} : \abGal{K} \to \operatorname{Aut} T_\ell A \cong \operatorname{GL}_{2g}(\mathbb{Z}_\ell)
\]
arising from the $\ell$-adic Tate modules of $A$. We will also consider the residual mod-$\ell$ representations
$
\rho_{\ell} : \abGal{K} \to \operatorname{Aut}(A[\ell]) \cong \operatorname{GL}_{2g}(\mathbb{F}_\ell),
$
and write $G_{\ell^{\infty}}$ (resp.~$G_\ell$) for the image of $\rho_{\ell^\infty}$ (resp.~of $\rho_\ell$). By work of many authors, in particular Serre \cite{Serre_resum8586} and Pink \cite{Pink}, much is known about these representations: if -- for some prime $\ell$ -- the Zariski closure of $G_{\ell^\infty}$ has rank $g+1$, then the same is true for all primes, and in fact the equality $G_{\ell^\infty}=\operatorname{GSp}_{2g}(\mathbb{Z}_\ell)$ holds for all $\ell$ large enough (depending on $A/K$). If we assume the Mumford--Tate conjecture, this condition is equivalent to the Mumford--Tate group of $A$ being $\operatorname{GSp}_{2g,\mathbb{Q}}$. Unconditionally, if $g=\dim A$ lies outside a certain very thin set the condition $\operatorname{End}_{\overline{K}}(A)=\mathbb{Z}$ implies $\operatorname{MT}(A)=\operatorname{GSp}_{2g,\mathbb{Q}}$ and the Mumford--Tate conjecture is true for $A$ (cf.~\Cref{thm_Pink} below).

Our objective is to make these results explicit, by finding a bound $\ell_0(A/K)$ (which, as the notation suggests, will depend on $A$ and $K$) such that, for all primes $\ell>\ell_0(A/K)$, the representation $\rho_{\ell^\infty}$ surjects onto $\operatorname{GSp}_{2g}(\mathbb{Z}_\ell)$. 
To state our results more compactly we introduce the following functions:

\begin{definition}\label{def_bFunction}
Let $A/K$ be a $g$-dimensional abelian variety over a number field $K$. Let $h(A)$ be its stable Faltings height, with the original normalisation of Faltings \cite{MR740897} (see also \cite[§2.3]{MR3225452}). We define
\[
b(A/K) = b([K:\mathbb{Q}],g,h(A))=\left( (7g)^{8g^2} [K:\mathbb{Q}] \max\left(h(A), \log [K:\mathbb{Q}],1 \right) \right)^{2g^2}
\]
and $b(A/K; d) = b(d[K:\mathbb{Q}],g,h(A))$. 

\end{definition}

Our main result is the following explicit bound for the quantity $\ell_0$ introduced above:

\begin{theorem}[=\Cref{thm_MainProof}]\label{thm_Main}
Let $A/K$ be an abelian variety of dimension $g \geq 2$ and $G_{\ell^\infty}$ be the image of the natural representation $\rho_{\ell^\infty} : \abGal{K} \to \operatorname{Aut} T_\ell A$. Suppose that:
\begin{enumerate}
\item $\operatorname{End}_{\overline{K}}(A)=\mathbb{Z}$;
\item there exists a place $v$ of $K$, of good reduction for $A$ and with residue field of order $q_v$, such that the characteristic polynomial of the Frobenius at $v$ (cf.~\Cref{sect_FrobElem}) has Galois group $\left(\mathbb{Z}/2\mathbb{Z}\right) \wr \op{Sym}_g$.
\end{enumerate}
Then the equality $G_{\ell^\infty}=\operatorname{GSp}_{2g}(\mathbb{Z}_\ell)$ holds for every prime $\ell$ unramified in $K$ and such that
\[
\ell > \max\left\{ \left(2q_v^{4(\log_2(2g)\sqrt[3]{2g} + 1)}\right)^{2^g\cdot g!}, \; b(A/K; 2g), \right\}.
\]
\end{theorem}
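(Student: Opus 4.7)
The argument will follow the standard two-step strategy: first establish mod-$\ell$ surjectivity, namely $G_\ell = \operatorname{GSp}_{2g}(\mathbb{F}_\ell)$, and then bootstrap to the $\ell$-adic statement $G_{\ell^\infty} = \operatorname{GSp}_{2g}(\mathbb{Z}_\ell)$ by means of a group-theoretic lifting argument. The lifting step is by far the easier one: a symplectic analogue of Serre's classical lemma for $\operatorname{GL}_2$ (valid for $\ell$ larger than a small polynomial in $g$) says that any closed subgroup of $\operatorname{GSp}_{2g}(\mathbb{Z}_\ell)$ surjecting mod $\ell$ is forced to be the full group. Since $\ell$ is assumed unramified in $K$, the multiplier (i.e.\ cyclotomic) character is surjective onto $\mathbb{Z}_\ell^\times$, so it suffices to show that $G_\ell$ contains $\operatorname{Sp}_{2g}(\mathbb{F}_\ell)$; the reverse inclusion $G_\ell \subseteq \operatorname{GSp}_{2g}(\mathbb{F}_\ell)$ is automatic from the Weil pairing.

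For the mod-$\ell$ surjectivity I would start by invoking theorem \ref{thm_Pink}, which under hypothesis (1) together with $g \geq 3$ forces the Zariski closure of $G_\ell$ in $\operatorname{GL}_{2g,\mathbb{F}_\ell}$ to have connected component $\operatorname{GSp}_{2g,\mathbb{F}_\ell}$ for all $\ell$ large in terms of $A/K$. For such $\ell$ the only obstruction to $G_\ell \supseteq \operatorname{Sp}_{2g}(\mathbb{F}_\ell)$ is containment of $G_\ell$ in one of the known (Aschbacher-style) maximal subgroups of $\operatorname{GSp}_{2g}(\mathbb{F}_\ell)$. Hypothesis (2) provides the arithmetic tool for excluding these: the splitting field $L/K$ of the Frobenius characteristic polynomial $f_v$ has degree $|W| = 2^g \cdot g!$, where $W = (\mathbb{Z}/2\mathbb{Z})^g \rtimes S_g$ is the Weyl group of $\operatorname{GSp}_{2g}$, and inside $L$ one finds a subextension $K' \supseteq K$ of degree $g!$ over which the Frobenius torus already splits. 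For $\ell$ not dividing the discriminant of $f_v$, the image $\rho_\ell(\operatorname{Frob}_v)$ is regular semisimple and generates a split maximal torus of $\operatorname{GSp}_{2g}(\mathbb{F}_\ell)$; Hasse-Weil style bounds on the size of this discriminant in terms of $q_v$ produce the first term $\left(2q_v^{4(\sqrt{6g}+1)}\right)^{2^g \cdot g!}$ in the maximum.

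The remaining exotic maximal subgroups are then eliminated using the explicit Masser-W\"ustholz / Gaudron-R\'emond isogeny estimates encoded in $b(A/K; g!)$ and $b(A^2/K; g)^{1/2g}$. Schematically, a non-trivial $G_\ell$-stable decomposition of $A[\ell]$ (or, in the tensor-induced case, of $A^2[\ell]$) produces an isogeny from $A$ or $A^2$ of $\ell$-power degree, contradicting the Faltings-height-based bound on isogeny degrees as soon as $\ell$ exceeds $b(A/K; g!)$, respectively $b(A^2/K; g)^{1/2g}$. The degree-$g!$ base change implicit in $b(A/K; g!)$ corresponds to working over the field $K'$ on which the Frobenius torus has become split, while the passage to $A^2$ is needed to rule out tensor-induced containments that the Galois action on $A$ alone cannot see; a direct rank count presumably shows that once $g \geq 19$ these tensor-type maximal subgroups cannot occur geometrically, which allows the corresponding term to be dropped.

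The main obstacle is the mod-$\ell$ step, and specifically the systematic case analysis excluding each class of maximal subgroup of $\operatorname{GSp}_{2g}(\mathbb{F}_\ell)$: each case requires a tailored combination of the regular semisimplicity of $\rho_\ell(\operatorname{Frob}_v)$ (coming from hypothesis (2) together with the bound on $q_v$) and an isogeny-theoretic contradiction obtained by applying Masser-W\"ustholz over the appropriate extension. Tracking which extension is needed in each case is what dictates the shape of the bound, in particular the appearance of the auxiliary parameter $d = g!$ (respectively $d = g$) in the definition of $b(A/K; d)$.
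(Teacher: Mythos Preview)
Your two-step strategy (mod-$\ell$ surjectivity plus Serre's lifting lemma) is exactly right, and the reduction to excluding each Aschbacher class of maximal subgroups of $\operatorname{GSp}_{2g}(\mathbb{F}_\ell)$ is also the paper's approach. However, the way you propose to carry out the mod-$\ell$ step has several genuine problems.

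First, the appeal to theorem~\ref{thm_Pink} is both unnecessary and incorrect. That theorem requires $g\notin S$, and $g\ge 3$ is not enough: $g=4$ lies in $S$ (Mumford's fourfolds), as do infinitely many other values. The paper never uses theorem~\ref{thm_Pink} in the proof of theorem~\ref{thm_Main}; hypothesis~(2) is precisely what replaces it, since the existence of a Frobenius with Weyl-group Galois group forces the eigenvalues $\mu_1,\ldots,\mu_{2g}$ to generate a free group of rank $g+1$ (corollary~\ref{cor_FreeGroupWhenGaloisLarge}), which is the arithmetic input that substitutes for knowing the Mumford--Tate group.

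Second, your assignment of the three terms in the bound to the various Aschbacher classes is off. The term $b(A/K;g!)$ does \emph{not} arise from a degree-$g!$ extension over which ``the Frobenius torus splits''; it comes from proposition~\ref{prop_EasyCases}(2), where an imprimitive $m$-decomposition becomes a genuine direct-sum decomposition after a degree $\le g!$ extension, and one then contradicts the isogeny theorem. Likewise $b(A^2/K;g)^{1/2g}$ handles the class-$\mathcal{C}_3$ (field-extension) case, not the tensor-induced case: after a degree-$\le g$ extension the centraliser of $G_\ell$ in $\operatorname{End} A[\ell]$ becomes too large, and one applies the isogeny theorem to $A^2$. The tensor-product, tensor-induced, and Lie-type class-$\mathcal{S}$ subgroups are all eliminated not by isogeny estimates but by the Frobenius-eigenvalue method: one shows that membership in such a subgroup forces a multiplicative relation $\prod\mu_i^{a_i}=\prod\mu_j^{b_j}$ with small exponents, which by lemmas~\ref{lemma_EigenvaluesGenerateFreeGroup}--\ref{lemma_Conjugates} cannot hold in characteristic~$0$, so $\ell$ divides a nonzero norm bounded by $(2q_v^{N})^{[F(v):\mathbb{Q}]}$. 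This is where $\left(2q_v^{4(\sqrt{6g}+1)}\right)^{2^g g!}$ comes from: the exponent $2^g g!$ is $[F(v):\mathbb{Q}]$, and $4(\sqrt{6g}+1)$ bounds the exponents appearing in the relations (proposition~\ref{prop_ConclusionLie} and section~\ref{sect_ConclusionELargerThan1}).

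Finally, the $g\ge 19$ statement has nothing to do with tensor subgroups disappearing geometrically: it is the purely numerical observation that $b(A/K;g!)>b(A^2/K;g)^{1/2g}$ once $g\ge 19$, so the latter term is already dominated in the maximum.
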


\begin{remark}\label{rmk: genus 2}
The case $g=1$ is treated in detail in \cite{MR1209248} and \cite{AdelicEC}, while the case $g=2$ is considered in \cite{Surfaces} and in the Appendix of \cite{MR3981312}.
\cite{Surfaces} uses a semistability above $\ell$ assumption but has no dependence on $q_v$; on the other hand the appendix of \cite{MR3981312} uses a bound of $(2q_v)^8$. This is better than the bound we give above but a case-by-case analysis of our results recovers this exponent when $g=2$. 
\end{remark}

Let us make a few comments on condition (2) in \Cref{thm_Main}. The hypothesis concerning the Galois group of $f_v(x)$ might seem somewhat unnatural, but it is a simple way to encode the fact that the roots of $f_v(x)$ are ``maximally generic'': this condition implies that the subgroup of $\overline{\mathbb{Q}}^\times$ they generate is free of rank $g+1$ \cite[Theorem 1.7 and Remark 1.3]{MR4735944}. 
We also remark that if at least one suitable place $v$ exists, then the set of such places has density one; furthermore, the existence of such a place is \textit{equivalent} to the fact that the equality $G_{\ell^\infty}=\operatorname{GSp}_{2g}(\mathbb{Z}_\ell)$ holds for some $\ell$. Thus, if the equality $G_{\ell^\infty}=\operatorname{GSp}_{2g}(\mathbb{Z}_\ell)$ holds for all sufficiently large primes $\ell$ (in particular, if the conclusion of the theorem is true for \textit{some} bound $\ell_0(A/K)$), it is very easy in practice to find a place $v$ that satisfies condition (2). 
From a more theoretical perspective, a theorem of Pink, combined with earlier work of Serre, implies that such places $v$ always exist if the dimension of $A$ lies outside a certain (very thin) set of ``exceptional'' dimensions:
\begin{theorem}[{Pink \cite{Pink}, Serre \cite{Serre_resum8586,LettreVigneras}}]\label{thm_Pink}
Let $A$ be an abelian variety of dimension $g$ defined over the number field $K$ and let
\[
S=\left\{ \frac{1}{2}(2n)^k \bigm\vert n>0, k\geq 3 \text{ odd} \right\} \cup \left\{\frac{1}{2} \binom{2n}{n} \bigm\vert n>1 \text{ odd}\right\}.
\]
If $\operatorname{End}_{\overline{K}}(A)=\mathbb{Z}$ and $\dim A \not \in S$, then:
\begin{itemize}
\item the Mumford--Tate conjecture is true for $A$;
\item the Mumford--Tate group of $A$ is isomorphic to $\operatorname{GSp}_{2g,\mathbb{Q}}$;
\item the equality $G_{\ell^\infty}=\operatorname{GSp}_{2g}(\mathbb{Z}_\ell)$ holds for every sufficiently large prime $\ell$.
\end{itemize}
\end{theorem}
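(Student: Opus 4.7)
The plan is to combine a classification of reductive pairs with Pink's comparison of Hodge-theoretic and $\ell$-adic monodromy. Since $\operatorname{End}_{\overline{K}}(A) = \mathbb{Z}$, the tautological $2g$-dimensional representation of the connected reductive group $\operatorname{MT}(A) \subseteq \operatorname{GSp}_{2g,\mathbb{Q}}$ on $V := H^1(A_{\mathbb{C}}, \mathbb{Q})$ is faithful, irreducible, symplectic, and minuscule (the last coming from the fact that the Hodge cocharacter has only two distinct weights on $V$). The first step is to classify all reductive pairs $(G, V)$ satisfying these four properties in dimension $2g$: outside the exceptional list $S$, the only possibility is that $G = \operatorname{GSp}_{2g,\mathbb{Q}}$ with $V$ its standard representation; the dimensions in $S$ correspond to two well-known exceptional families, namely odd external tensor powers of symplectic standard representations (giving $\tfrac{1}{2}(2n)^k$) and middle exterior powers of $\operatorname{SL}_{2n}$ with $n$ odd (giving $\tfrac{1}{2}\binom{2n}{n}$). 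This already establishes the second bullet.

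For the Mumford-Tate conjecture (first bullet), the key input is Pink's theorem \cite{Pink}, which asserts that the identity component of the $\ell$-adic algebraic monodromy group $G_\ell^{\operatorname{alg}}$ satisfies the \emph{same} list of constraints: its representation on $V_\ell(A) = T_\ell A \otimes \mathbb{Q}_\ell$ is faithful, irreducible (by $\operatorname{End}_{\overline{K}}(A) = \mathbb{Z}$ combined with Faltings' semisimplicity theorem), symplectic (from the Weil pairing), and minuscule. The minusculity is the technical heart of \cite{Pink}, proved through a weight analysis of Frobenius tori inside $G_\ell^{\operatorname{alg}}$ via Weil's bounds. Applying the same classification, $G_\ell^{\operatorname{alg}, \circ} = \operatorname{GSp}_{2g,\mathbb{Q}_\ell}$ whenever $g \notin S$; combined with the always-valid inclusion $\operatorname{MT}(A)_{\mathbb{Q}_\ell} \subseteq G_\ell^{\operatorname{alg}}$ (Deligne), equality follows.

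For the third bullet, the equality $G_\ell^{\operatorname{alg}} = \operatorname{GSp}_{2g,\mathbb{Q}_\ell}$ together with Bogomolov's theorem shows that $G_{\ell^\infty}$ is open in $\operatorname{GSp}_{2g}(\mathbb{Z}_\ell)$ for every $\ell$. To promote openness to equality for $\ell$ sufficiently large, the plan is to follow Serre \cite{Serre_resum8586, LettreVigneras}: for $\ell \gg 0$, show that $G_\ell$ contains a transvection, conclude $G_\ell = \operatorname{GSp}_{2g}(\mathbb{F}_\ell)$ from a generation theorem for classical groups, and then lift to $\mathbb{Z}_\ell$ via the vanishing of the relevant cohomology of $\operatorname{Sp}_{2g}(\mathbb{F}_\ell)$ acting on its Lie algebra. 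The principal obstacle is Pink's $\ell$-adic classification: once the minusculity of $V_\ell(A)$ under $G_\ell^{\operatorname{alg}, \circ}$ is established via Frobenius-torus analysis, the combinatorial classification of minuscule symplectic reductive pairs, though non-trivial, is self-contained, and the final passage from algebraic-group equality to profinite-group surjectivity is standard.
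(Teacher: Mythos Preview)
The paper does not prove this theorem; it is quoted with attribution to Pink and Serre and serves only as motivation for the paper's own effective results. So there is no in-paper argument to compare against, and I assess your outline against the cited sources.

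Your plan for the first two bullets is essentially correct and matches \cite{Pink}. Two points of precision are worth noting. First, the step ``Hodge cocharacter has two weights $\Rightarrow$ minuscule'' is not immediate from the definitions: one needs that the $\operatorname{MT}(A)$-conjugates of the Hodge cocharacter generate the derived group, so the cocharacter cannot be orthogonal to any root of a simple factor, and then a short argument forces each simple factor to act through a minuscule weight. Second, Pink does not literally prove that $V_\ell$ is minuscule under $G_\ell^{\operatorname{alg},\circ}$; rather he constructs \emph{weak Hodge cocharacters} from Frobenius tori (using the Weil-number structure of the Frobenius eigenvalues) and shows these constrain the pair $(G_\ell^{\operatorname{alg},\circ}, V_\ell)$ to the same finite list as the Hodge-theoretic argument gives for $(\operatorname{MT}(A), V)$. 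The conclusion is the same, but the mechanism is his notion of a weak Mumford--Tate triple rather than minusculity directly.

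For the third bullet there is a genuine gap in your plan. ``Show that $G_\ell$ contains a transvection'' is Hall's later strategy \cite{MR2820155}, and it requires a geometric hypothesis (a semistable fibre of toric rank one) that is not assumed in the statement and is not available in general. Serre's actual route in \cite{LettreVigneras, Serre_resum8586} is different: from the equality $G_\ell^{\operatorname{alg}} = \operatorname{GSp}_{2g,\mathbb{Q}_\ell}$ one deduces, via a Chebotarev argument, the existence of Frobenius elements whose eigenvalues generate a multiplicative group of rank $g+1$ (the ``Weyl-generic'' Frobenii of hypothesis (2) in Theorem~\ref{thm_Main}), and then one runs the maximal-subgroup classification for $\operatorname{GSp}_{2g}(\mathbb{F}_\ell)$ --- exactly the argument that \S\ref{sect_GroupTheory}--\S\ref{sect_TensorInduced} of this paper make effective --- to conclude $G_\ell \supseteq \operatorname{Sp}_{2g}(\mathbb{F}_\ell)$ for $\ell \gg 0$. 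The lift to $\mathbb{Z}_\ell$ is then Lemma~\ref{lemma_SerreLifting}, as you say. Replace the transvection step by this Frobenius-eigenvalue argument and your outline becomes a correct proof sketch.
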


\begin{remark}
The set $S$ is closely related to the set of dimensions $d$ for which there exists a symplectic minuscule representation of dimension $d$, cf.~\Cref{sect_LieGroups}.

\end{remark}

To have a completely effective result, we would also need to show that the number $q_v$ above can be effectively bounded \textit{a priori} in terms of simple arithmetical invariants of $A/K$. Under GRH, this has been achieved to a large extent in \cite{ZywinaEffectiveOpenImage} (which also gives an unconditional result similar in spirit to \Cref{thm_Main}). However, our techinques are different, and offer two main advantages: on the one hand, we get a completely explicit result, whereas \cite[Theorem 1.2]{ZywinaEffectiveOpenImage} involves some unspecified (although in principle effective) constants. On the other, as we describe in \S \ref{sect: CompBounds}, the method of proof also leads to a general algorithm to compute a good upper bound on the largest non-surjective prime of an abelian threefold $A/\mathbb{Q}$ with trivial geometric endomorphism ring, with large parts of the approach applying to abelian varieties of arbitrary dimension over arbitrary number fields. This generalises both \cite{MR1969642, MR4732686} (which consider abelian surfaces over $\mathbb{Q}$) and \cite{ZywinaExample} (which deals with semistable abelian threefolds over $\mathbb{Q}$ having one prime where the reduction has toric rank $1$).

To conclude this introduction we now describe the organisation of this paper. After two sections of preliminaries (§ \ref{sect_Preliminaries} and \ref{sect_GroupTheory}) we study the various classes of maximal proper subgroups $G$ of $\operatorname{GSp}_{2g}(\mathbb{F}_\ell)$, showing that -- at least for $\ell$ large enough -- $G_\ell$ cannot be contained in any such $G$. This occupies \S \ref{sect_EasyCases}-\ref{sect_LieGroups}, each of which deals with a different kind of maximal subgroup, with \Cref{thm_Main} finally proved in §\ref{sect_MainProof}.

Since the bound given by \Cref{thm_Main} can be prohibitively large, we take a more practical (albeit less uniform) approach in \S \ref{sect: CompBounds}; this is illustrated by an example in \S \ref{sect: Example} which could not be analysed by previous methods. 

We say a few more words on the techniques used in Sections \ref{sect_EasyCases} through \ref{sect_LieGroups}.
Three classes of maximal subgroups (traditionally dubbed ``imprimitive", ``reducible", and ``field extension" cases) are dealt with in \S\ref{sect_EasyCases} as an almost immediate consequence of the isogeny theorem of Masser and W\"ustholz \cite{MR1217345,MR1207211} (the completely explicit version we employ is due to Gaudron and Rémond \cite{PolarisationsEtIsogenies,MR4592752}). 

In §\ref{sect_TensorProductsI} we consider the case of $G_\ell$ being contained in a ``tensor product" subgroup, and we show how, given a place $v$ as in hypothesis (2) of \Cref{thm_Main}, one can produce a finite set of integers whose divisors include all the primes for which $G_\ell$ is of tensor product type. This is inspired by an argument of Serre \cite{LettreVigneras}, but his use of the characteristic polynomial of $\operatorname{Fr}_v$ is almost completely replaced by a direct study of the multiplicative relations satisfied by its roots. 

In \S\ref{sect_ConstantGroups}, we eliminate the possibility of $G_\ell$ being a small ``exceptional" (or ``constant") group by obtaining a lower bound on $\#\mathbb{P}G_\ell$ via a result of Collins. Finally, in  \S\ref{sect_LieGroups} we use tools from representation theory (of both algebraic and finite groups) to treat the last remaining class of maximal subgroups of $\operatorname{GSp}_{2g}(\mathbb{F}_\ell)$, namely the so-called exceptional subgroups of class $\mathcal{S}$, once again forcing relations $\bmod{\, \ell}$ between eigenvalues of Frobenius elements as in \S\ref{sect_TensorProductsI}.

\smallskip

\noindent\textbf{Acknowledgments.} The second author thanks N. Ratazzi for suggesting the problem and J.-P. Serre for inspiring discussions. The authors are supported by MUR grant PRIN-2022HPSNCR (funded by the European Union project Next Generation EU).

\section{Preliminaries}\label{sect_Preliminaries}

\subsection{The isogeny theorem and the Weil pairing}\label{sect_IsogenyTheorem}
Many of our estimates rely on the following `isogeny theorem', originally due to Masser and W\"ustholz \cite{MR1207211,MR1217345} and then improved and made explicit by Gaudron and Rémond. The theorem applies to arbitrary abelian varieties, but we restrict to those with trivial endomorphisms which has an improved bound and is the only situation we need.

\begin{theorem}[Isogeny Theorem, {\cite[Theorem 1.9(1)]{MR4592752}}]\label{thm_Isogeny}
Let $A/K$ be an abelian variety such that $\operatorname{End}_K(A)=\mathbb{Z}$. For every abelian variety $A^*$ defined over $K$ that is $K$-isogenous to $A$, there exists a $K$-isogeny $A^* \to A$ whose degree is bounded by $b(A/K)$ (cf.~\Cref{def_bFunction}). 
\end{theorem}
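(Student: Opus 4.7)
The plan is to follow the transcendence-theoretic strategy of Masser--Wüstholz, in the refined and fully explicit form due to Gaudron--Rémond via Bost's slopes method. A naive attempt using only the dual isogeny is inadequate: given a specific isogeny $\phi : A \to A^*$ of degree $d$ one can always produce a reverse isogeny $A^* \to A$ of degree polynomial in $d$, but this is not a bound depending on $A/K$ alone --- it depends on the \emph{a priori} unknown $A^*$, and so cannot be turned into a statement uniform across the entire $K$-isogeny class of $A$. One therefore needs a genuinely new tool: a way to produce a ``small'' isogeny directly from analytic data attached to $A$ itself.

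The core geometric step is Poincaré reducibility applied to $A \times A^*$: any $K$-isogeny $A \to A^*$ is encoded by a $K$-abelian subvariety $\Gamma \subset A \times A^*$ of dimension $g$ projecting isogenously onto each factor. One is thus reduced to exhibiting such a $\Gamma$ whose degree, measured via the restriction of a product polarization, is bounded in terms of $h(A)$ and $[K:\mathbb{Q}]$ alone. This is exactly what the analytic subgroup theorem of Wüstholz --- or equivalently, an arithmetic slope inequality on $\operatorname{Lie}(A \times A^*)$ in the sense of Bost --- provides: it converts the existence of a $\overline{\mathbb{Q}}$-rational Lie subalgebra into an upper bound on the degree of the corresponding algebraic subgroup, expressed in terms of the periods of $A \times A^*$. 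Period estimates going back to Bost and Graftieaux then translate this into a bound depending on $h(A)$.

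The extra work needed to reach the completely explicit form in theorem \ref{thm_Isogeny} consists in combining three effective ingredients: explicit period bounds, an explicit slope inequality with tracked constants, and Faltings' height comparison under isogeny (which controls $h(A^*)$ by $h(A) + \tfrac12 \log d$ and is used to close up a bootstrap on intermediate isogeny degrees). The hard part, and the reason Gaudron--Rémond devote an entire paper to the result, is bookkeeping: the auxiliary polynomial construction, the zero/matrix lemma on periods, and the slope inequality each introduce constants that must be tracked and optimized simultaneously in order for the exponent $\alpha(g)=2^{10}g^3$ to emerge with the shape recorded in definition \ref{def_bFunction}. For the purposes of the present paper the theorem is used entirely as a black box, but it is precisely the polynomial dependence on $h(A)$ and $[K:\mathbb{Q}]$ --- and nothing weaker --- that will make every subsequent effective bound effective.
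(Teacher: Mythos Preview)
The paper does not prove this theorem at all: it is stated with a direct citation to Gaudron--R\'emond \cite[Theorem 1.4]{PolarisationsEtIsogenies} and used throughout as a black box (exactly as you yourself note in your final paragraph). So there is no ``paper's own proof'' to compare your proposal against.

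Your sketch is a fair high-level summary of the Masser--W\"ustholz strategy as refined by Gaudron--R\'emond, and nothing in it is wrong. But it is not a proof in any meaningful sense: every substantive step (the explicit period theorem, the slope inequality with tracked constants, the bootstrap via Faltings' height inequality, and the optimization yielding the specific function $b(A/K)$ of definition~\ref{def_bFunction}) is asserted rather than carried out. For the purposes of this paper that is exactly right --- the theorem is an input, not a result to be re-proved --- so the appropriate thing is simply to cite it, as the paper does.
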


\vspace{5pt}

An immediate application of the isogeny bound is to polarising $A$. We fix now (once and for all) a polarisation $\varphi$ of $A$ of minimal degree; this satisfies $\deg \varphi \leq b(A/K)$. With this fixed, we are able to construct the corresponding Weil pairing on the $\ell$-adic Tate module of $A$. Moreover, if $\ell$ is coprime to $\deg \varphi$, then the Weil pairing is nondegenerate so $G_{\ell^{\infty}} \subseteq \op{GSp}_{2g}(\mathbb{Z}_\ell)$ and $G_{\ell}$ is a subgroup of $\operatorname{GSp}_{2g}(\mathbb{F}_\ell)$. Combining this with \cite[Theorem 4.1]{MR2081941} (cf.~also \cite[Lemma 1 on p.~52]{LettreVigneras}) we have:

\begin{lemma}\label{cor_Conclusion}

Suppose that $A$ admits a polarisation of degree prime to $\ell$: then $G_\ell \subseteq \operatorname{GSp}_{2g}(\mathbb{F}_\ell)$; this is in particular true for all $\ell>b(A/K)$. If $\dim A=1$ suppose $\ell \geq 5$, and if $\dim A =2$ suppose $\ell \geq 3$. If $K$ is linearly disjoint from $\mathbb{Q}(\zeta_{\ell^\infty})$ (for example, if $\ell$ is unramified in $K$), the inclusion $\operatorname{Sp}_{2g}(\mathbb{F}_\ell) \subseteq G_\ell$ implies $G_{\ell^\infty}=\operatorname{GSp}_{2g}(\mathbb{Z}_\ell)$.
\end{lemma}

\subsection{Frobenius elements and their eigenvalues}\label{sect_FrobElem}

Let $\Omega_K$ denote the set of finite places of $K$. For each $v \in \Omega_K$, we write $q_v$ for the cardinality of the residue field at $v$. We also write $\operatorname{Fr}_v \in \abGal{K}$ for a Frobenius element at $v$.
If $v$ is a place of $K$ of good reduction for $A$, the characteristic polynomial of $\rho_{\ell^\infty}(\operatorname{Fr}_v)$ does not depend on $\ell$ (as long as $v \nmid \ell$), and will be denoted by $f_v(x) \in \mathbb{Z}[x]$. We write $\mu_1,\ldots,\mu_{2g}$ for the roots of $f_v(x)$ in $\overline{\mathbb{Q}}$ and call these algebraic integers the \textbf{eigenvalues of $\operatorname{Fr}_v$}. Their sum is called the \textbf{trace of $\operatorname{Fr}_v$}, denoted by $\operatorname{tr} \operatorname{Fr}_v$.

The splitting field of $f_v(x)$ is a Galois extension of $\mathbb{Q}$ which we call $F(v)$. By the Weil conjectures (proven in general by Deligne \cite{MR0340258}), the absolute value of every $\mu_i$ under any embedding of $F(v)$ in $\mathbb{C}$ is equal to $q_v^{1/2}$. The restriction of complex conjugation to $F(v)$ gives a central element $\iota \in \operatorname{Gal}(F(v)/\mathbb{Q})$ of order at most $2$ (this element acts as $\mu \mapsto q_v / \mu$ on the roots of $f_v(x)$).
If $\ell$ is a rational prime not lying below $v$, choose a place $\mathfrak{l} \mid \ell$ of $F(v)$. We denote the reductions $\bmod{\mathfrak{l}}$ of the algebraic integers $\mu_i$ by $\overline{\mu}_i$ and shall implicitly identify these with the roots in $\overline{\mathbb{F}}_\ell$ of the characteristic polynomial of $\rho_\ell(\op{Fr}_v)$.

\begin{notation}
Let $v \in \Omega_K$ be a place of good reduction for $A$. We denote by $\Phi_v = \{\mu_1, \ldots, \mu_{2g} \}$ the set of roots of $f_v(x)$ and by $\Gamma_v$ the subgroup of $\overline{\mathbb{Q}}^\times$ generated by $\Phi_v$. We order the $\mu_i$ so that $\mu_{2g+1-i} = \iota(\mu_i)$ for all $1 \leq i \leq2g$, where $\iota$ is as above; this implies $\mu_i\mu_{2g+1-i}=q_v$ for all $1 \leq i \leq 2g$. We denote by $\Gamma_v^{\iota=\operatorname{id}}$ the subgroup of $\Gamma_v$ fixed by $\iota$ and by $\overline{\mu}_i$ the reduction of $\mu_i$ in $\overline{\mathbb{F}}_\ell$.
\end{notation}

\begin{remark}
Note that since $\mu_i\mu_{2g+1-i}=q_v \in \mathbb{Q}$ for all $i$, the Galois action on $\Phi_v$ preserves the partition given by the pairs $\{\mu_i, \mu_{2g+1-i}\}$ for $1 \leq i \leq g$. Consequently, the Galois group of $f_v(x)$ can be identified with a subgroup of the wreath product $\mathbb{Z}/2\mathbb{Z} \, \wr \, \op{Sym}_g$, where $\op{Sym}_g$ acts by permuting the pairs $\{\mu_i, \mu_{2g+1-i}\}$ and the $i$-th copy of $\mathbb{Z}/2\mathbb{Z}$ acts as a transposition on $\{\mu_i, \mu_{2g+1-i}\}$. In particular, $[F(v):\mathbb{Q}]$ divides $2^g g!$.
\end{remark}

\begin{remark}\label{rem:gen_sets}
The relations $\mu_i\mu_{2g+1-i}=q_v$ impose that the subgroup $\Gamma_v= \langle \mu_1, \ldots, \mu_{2g} \rangle$ of $\overline{\mathbb{Q}}^\times$ has rank at most $g+1$, with generating sets $\{ \mu_1, \ldots, \mu_g, \mu_{g+1} \}$ and $\{ \mu_1, \ldots, \mu_g, q_v \}$. Moreover, if $\Gamma_v$ has rank $g+1$, it is free.
\end{remark}

From now on, we'll mostly work with places $v$ such that the subgroup $\Gamma_v$ generated by Frobenius eigenvalues has maximal rank which is a very mild constraint (cf.~the discussion before \Cref{thm_Pink}).

\begin{lemma}\label{lemma_EigenvaluesGenerateFreeGroup}
Suppose that $g \geq 2$ and $\Gamma_v$ has rank $g+1$. Let $(n_1,\ldots,n_{2g}) \in \mathbb{Z}^{2g}$ be such that
$\prod_{i=1}^{2g} \mu_i^{n_i} \in \Gamma_v^{\iota = \operatorname{id}}.$  Then $n_i=n_{2g+1-i}$ for all $1 \leq i \leq 2g$. In particular, $\Gamma_v^{\iota = \operatorname{id}} = q_v^\mathbb{Z}$.
\end{lemma}

\begin{proof}
Using $\mu_i\iota(\mu_i)=q_v$, we rewrite $\alpha:=\prod_{i=1}^{2g} \mu_i^{n_i}$ as $q_v^m\prod_{i=1}^g \mu_i^{n_i-n_{2g+1-i}},$ where $m=\sum_{i=g+1}^{2g} n_i.$ On the other hand, $\iota(\alpha)=q_v^{m'} \prod_{i=1}^g \mu_i^{n_{2g+1-i}-n_i}$ where $m'=\sum_{i=1}^g n_i$.
We rearrange the equality $\alpha=\iota(\alpha)$ to get $\prod_{i=1}^g \mu_i^{2(n_i-n_{2g+1-i})} = q_v^{m'-m}$; since all these terms are multiplicatively independent (cf. \Cref{rem:gen_sets}), we necessarily have $n_i-n_{2g+1-i}=0$ for all $1 \leq i \leq 2g$.
\end{proof}

\begin{lemma}\label{lemma_Squares}
Suppose that $g \geq 2$ and $\Gamma_v$ has rank $g+1$. Let $\lambda_1, \lambda_2, \lambda_3, \lambda_4$ be four distinct eigenvalues of $\operatorname{Fr}_v$.

\begin{enumerate}
\item We have $\lambda_1^2 \neq \lambda_2\lambda_3$.
\item If $\lambda_1\lambda_2=\lambda_3\lambda_4$, then $\lambda_2=\conj{\lambda_1}$ and $\lambda_4=\conj{\lambda_3}$.
\end{enumerate}
Let $m, N$ be positive integers with $m$ odd and $\lambda_1, \ldots, \lambda_{2m}$ be distinct eigenvalues of $\operatorname{Fr}_v$.
\begin{enumerate}
\setcounter{enumi}{2}
\item We have $\prod_{i=1}^m \lambda_i^N \neq \prod_{i=m+1}^{2m} \lambda_i^N$.
\end{enumerate}

\end{lemma}

\begin{proof}
(1): Note that any three eigenvalues are multiplicatively independent; this follows immediately from \Cref{rem:gen_sets}.

(2): Observe first that $\{\lambda_1,\lambda_2,\lambda_3,\lambda_4\}=\{\alpha,\beta,\iota(\alpha),\iota(\beta)\}$ for some eigenvalues $\alpha,\beta$ (otherwise the set is multiplicatively independent by \Cref{rem:gen_sets}).
Suppose for contradiction that $\iota(\lambda_1) \neq \lambda_2$. The equation becomes $\lambda_1\lambda_2=\iota(\lambda_1\lambda_2)$ and hence $\lambda_1\lambda_2 \in \Gamma_v^{\iota=\operatorname{id}}$. However by \Cref{lemma_EigenvaluesGenerateFreeGroup} we have $\Gamma_v^{\iota=\operatorname{id}}=q_v^{\mathbb{Z}}$ and hence $\lambda_1,\lambda_2,q_v$ are not multiplicatively independent which is a contradiction.

(3): Note that the equality $\prod_{i=1}^m \lambda_i^N = \prod_{i=m+1}^{2m} \lambda_i^N$ implies that $\prod_{i=1}^m \lambda_i^N\prod_{i=m+1}^{2m} \lambda_i^{-N} = 1$ belongs to $\Gamma_v^{\iota = \operatorname{id}}$. By \Cref{lemma_EigenvaluesGenerateFreeGroup}, the number of exponents equal to $N$ in the product $\prod_{i=1}^m \lambda_i^N\prod_{i=m+1}^{2m} \lambda_i^{-N}$ is even, which contradicts the assumption that $m$ is odd and $N>0$.
\end{proof}

The main application of this section will be through the following proposition which will enable us to bound $\ell$ whenever we are able to force relations between the eigenvalues of $\rho_\ell(\operatorname{Fr}_v)$.

\begin{proposition}\label{prop_MultRelBoundsEll}
Suppose that $g \geq 2$ and $\Gamma_v$ has rank $g+1$. Let $\lambda_1,\ldots,\lambda_n$ be distinct eigenvalues of $\operatorname{Fr}_v$ and let $\ell$ be a rational prime such that $v \nmid \ell$. Let $N$ be a positive integer.

\begin{enumerate}
\item If there exists a place $\mathfrak{l} \mid \ell$ of $F(v)$ such that $\lambda_2^{2N} \equiv \lambda_1^N\lambda_3^N \bmod{\mathfrak{l}}$, then $\ell \leq (2q_v^N)^{[F(v):\mathbb{Q}]}$.
\item If there exists a place $\mathfrak{l} \mid \ell$ of $F(v)$ such that $(\lambda_1\lambda_3)^N \equiv (\lambda_2\lambda_4)^N \bmod{\mathfrak{l}}$ and $(\lambda_1\lambda_6)^N \equiv (\lambda_2\lambda_5)^N \bmod{\mathfrak{l}}$, then $\ell \leq (2q_v^N)^{[F(v):\mathbb{Q}]}.$ 
\end{enumerate}
Let $m$ be a positive odd integer with $2m \leq n$. 
\begin{enumerate}
\setcounter{enumi}{2}
\item If there exists a place $\mathfrak{l} \mid \ell$ of $F(v)$ such that 
$(\lambda_1 \ldots \lambda_m)^N \equiv (\lambda_{m+1} \ldots \lambda_{2m})^N \bmod{\mathfrak{l}}$, then $\ell \leq (2q_v^{mN/2})^{[F(v) : \mathbb{Q}]}$.
\end{enumerate}
\end{proposition}

\begin{proof}
First note that by the Weil conjectures $|\sigma(\lambda_i^N\lambda_j^N)|=q^N_v$ for every pair of eigenvalues, Galois automorphism $\sigma$ and positive integer $N$. Hence, by the triangle inequality, the norm $|N_{F(v)/\mathbb{Q}}(\lambda_i^N\lambda_j^N-\lambda_k^N\lambda_l^N)|$ is a positive rational integer bounded by $(2q_v^N)^{[F(v)/\mathbb{Q}]}$. Each congruence condition implies that these norms are divisible by $\ell$ so our bounds (1) and (2) will then follow so long as the relevant norms are nonzero.

For (1), it follows immediately from \Cref{lemma_Squares}(1) that $|N_{F(v)/\mathbb{Q}}(\lambda_2^{2N}-\lambda_1^N\lambda_3^N)| \neq 0$ since $\Gamma_v$ is free.
For (2), we consider instead $|N_{F(v)/\mathbb{Q}}((\lambda_1\lambda_3)^N-(\lambda_2\lambda_4)^N)|$ and $|N_{F(v)/\mathbb{Q}}((\lambda_1\lambda_6)^N-(\lambda_2\lambda_5)^N)|$; they cannot both be zero, since otherwise $\lambda_3=\lambda_6=\iota(\lambda_1)$ by \Cref{lemma_Squares}(2) and the fact that $\Gamma_v$ is free, contradicting distinctness of the eigenvalues.
For (3) the argument is very similar, using $|\sigma(\lambda_1 \cdots \lambda_m)^N| \leq q_v^{mN/2}$ for each Galois automorphism $\sigma$ and \Cref{lemma_Squares}(3).
\end{proof}

\begin{remark}
    Note that the proof of \Cref{prop_MultRelBoundsEll} produces a nonzero integer divisible by $\ell$, so we don't just obtain an upper bound on $\ell$, but also a divisibility condition. We will exploit this is \Cref{sect: CompBounds}.
\end{remark}

\begin{lemma}\label{lemma: semisimple Frob}
Let $v$ be a place of $K$ of good reduction for $A$ such that $f_v(x)$ is squarefree. Let $\ell > (2\sqrt{q_v})^{[F(v):\mathbb{Q}]}$ be a prime such that $v \nmid \ell$. Then $\rho_\ell(\operatorname{Fr}_v)$ is semisimple with distinct eigenvalues.
\end{lemma}

\begin{proof}
It suffices to prove that $\rho_\ell(\operatorname{Fr}_v)$ has distinct eigenvalues. If not, then similarly to the proof of \Cref{prop_MultRelBoundsEll}, $\ell$ divides the nonzero norm $|N_{F(v)/\mathbb{Q}}(\lambda_1-\lambda_2)| \leq (2\sqrt{q_v})^{[F(v):\mathbb{Q}]}$ for some pair of distinct roots $\lambda_1, \lambda_2$ of $f_v$.
\end{proof}

We conclude this section by recording our working assumption that $\ell$ does not divide the degree of a minimal polarisation. This is a minor technical point, but is nonetheless necessary if we want our statements (which often involve the group $\operatorname{GSp}_{2g}(\mathbb{F}_\ell)$) to be meaningful. Since we will frequently use relations between eigenvalues to eliminate some cases, we will also fix a place $v$ that gives rise to a suitable Frobenius element.

\begin{assumption}\label{assump_degreePolarization}
The two assumptions we shall frequently impose are as follows:
\begin{enumerate}
\item The rational prime $\ell$ does not divide the degree of a minimal polarisation of $A$, hence we may identify $G_\ell$ (resp.~$G_{\ell^\infty}$) with a subgroup of $\operatorname{GSp}_{2g}(\mathbb{F}_\ell)$ (resp.~$\operatorname{GSp}_{2g}(\mathbb{Z}_\ell)$).
\item We fix a place $v$ of $K$ of good reduction for $A$ such that $\Gamma_v$ is free of rank $g+1$.

\end{enumerate}
\end{assumption}

\begin{remark}
Our primary reason to invoke the choice of $v$ is to rule out a particular class of subgroups via \Cref{prop_MultRelBoundsEll} for sufficiently large $\ell$; the lower bound placed on $\ell$ then implies that $v$ and $\ell$ are coprime. We shall therefore also suppose that $v \nmid \ell$ in our proofs without further reference.
\end{remark}

\section{Maximal subgroups of \texorpdfstring{$\operatorname{GSp}_{2n}(\mathbb{F}_\ell)$}{finite symplectic groups}}\label{sect_GroupTheory}

\newcommand{\GroupElement}{h}

To prove the main result that $G_{\ell^{\infty}}=\operatorname{GSp}_{2g}(\mathbb{Z}_\ell)$ for $\ell$ greater than our explicit bound, it suffices to prove the equality $G_\ell=\operatorname{GSp}_{2g}(\mathbb{F}_\ell)$  by \Cref{cor_Conclusion}. To do this, we will use a description of the maximal proper subgroups of $\operatorname{GSp}_{2g}(\mathbb{F}_\ell)$: the core of our argument will be to show that -- for $\ell$ large enough -- $G_\ell$ cannot be contained in any proper subgroup of $\operatorname{GSp}_{2g}(\mathbb{F}_\ell)$, and so it must coincide with $\operatorname{GSp}_{2g}(\mathbb{F}_\ell)$. The purpose of this section is to introduce some notation and provide the classification of maximal subgroups in \Cref{thm_Aschbacher}. Our main references for this section are \cite{MR3098485} and \cite{MR1057341}.

\subsection{Group theoretical preliminaries}

We begin by recalling some basic facts from finite group theory.

\begin{definition}
Let $G$ be a finite group. The \textbf{socle} of $G$, denoted $\operatorname{soc}(G)$, is the subgroup of $G$ generated by the nontrivial minimal normal subgroups of $G$. We call $G$  \textbf{almost simple} if $S=\operatorname{soc}(G)$ is nonabelian simple; in this case $S \leq G \leq \operatorname{Aut}(S)$ and $S$ is a normal subgroup of $G$.
\end{definition}

\begin{lemma}\label{lemma_NoNormalAbelianSubgroups}
An almost simple group $G$ does not possess nontrivial normal solvable subgroups.
\end{lemma}
\begin{proof}
Suppose a nontrivial normal solvable subgroup exists. Then the collection of such subgroups is nonempty, and there is a minimal nontrivial normal subgroup $N_0$ of $G$ that is solvable (a subgroup of a solvable group is itself solvable). The definition of $\operatorname{soc}(G)$ implies $N_0 \subset \operatorname{soc}(G)$, and moreover $N_0$ is normal in $\operatorname{soc}(G)$ since it is normal in $G$. By simplicity of $\operatorname{soc}(G)$ this forces $N_0=\operatorname{soc}(G)$; however, the latter is simple nonabelian, hence in particular not solvable, contradiction.
\end{proof}

\begin{definition}
Let $G$ be a finite group. The group $\operatorname{Inn}(G)$ of inner automorphisms of $G$ is the normal subgroup of the automorphism group generated by the conjugation maps. The quotient is the \textbf{outer automorphism group} $\operatorname{Out}(G).$
\end{definition}

\begin{definition}
A group is \textbf{perfect} if it equals its commutator subgroup. If $H$ is a finite group we denote by $H^{\infty}$ the first perfect group contained in the derived series of $H$; equivalently,
\[
\displaystyle H^\infty = \bigcap_{i\geq 0} H^{(i)},
\]
where $H^{(0)}=H$ and $H^{(i+1)}=[H^{(i)},H^{(i)}]$ for every $i \geq 0$.
\end{definition}

\begin{lemma}\label{lemma_socGIsPerfect}
Let $G$ be a finite almost simple group and let $N$ be the exponent of $\operatorname{Out}(\operatorname{soc}(G))$. Then:
\begin{itemize}
\item $\operatorname{soc}(G)=G^{\infty}$, in particular $\operatorname{soc}(G)$ is perfect;
\item $\GroupElement^N \in \operatorname{soc}(G)$ for all $\GroupElement \in G$.
\end{itemize}

\end{lemma}

\begin{proof}
Let $S=\operatorname{soc}(G)$.
By definition we have $S \leq G \leq \operatorname{Aut}(S)$, and therefore $S^\infty \subseteq G^\infty \subseteq \operatorname{Aut}(S)^\infty$. Note $S^\infty=S \cong \operatorname{Inn}(S)$ (as $S$ is simple and nonabelian), and $\operatorname{Aut}(S)^\infty / \operatorname{Inn}(S) \subseteq \left( \operatorname{Aut}(S)/\operatorname{Inn}(S) \right)^\infty = \operatorname{Out}(S)^\infty$. Thus, it suffices to prove that $\operatorname{Out}(S)^\infty$ is trivial.
This follows immediately from the fact that the outer automorphism group of a simple group is solvable \cite[Theorem 1.3.2]{MR3098485}.
For the last statement, by definition of exponent, for every $\varphi \in \operatorname{Aut}(S)$ we have $\varphi^N \in \operatorname{Inn} (S) \cong S$.
From $G \subseteq \operatorname{Aut}(S)$ we then obtain $\GroupElement^N \in \{\varphi^N : \varphi \in \operatorname{Aut(S)}\} \subseteq S$.
\end{proof}

\subsection{Classical definitions}
We now recall some standard definitions in the theory of finite matrix groups, in particular the symplectic groups, Kronecker product, and semilinear groups. Throughout this section, we let $F$ be a finite field of characteristic $\ell \geq 5$. We denote by $\operatorname{GL}_n(F)$ the group of $n \times n$ invertible matrices with coefficients in $F$. For a matrix $x \in \operatorname{GL}_n(F)$, we denote by ${}^t x$ the transpose.
For every subgroup $G$ of $\operatorname{GL}_n(F)$, we denote by $\mathbb{P}G$ the image of $G$ in the quotient $\displaystyle \operatorname{\mathbb{P}GL}_n(F) := \frac{\operatorname{GL}_n(F)}{F^\times \cdot \operatorname{Id}}$. 

\begin{definition}
Let $J$ denote the antidiagonal matrix $\operatorname{antidiag}(\underbrace{1,\ldots,1}_n,\underbrace{-1,\ldots,-1}_n)$.
The \textbf{symplectic group} and the \textbf{group of symplectic similarities} are:
\begin{itemize}
\item $\operatorname{Sp}_{2n}(F)=\left\{ x \in \operatorname{GL}_{2n}(F) \bigm\vert {}^tx J x=J \right\};$
\item $\operatorname{GSp}_{2n}(F)=\left\{ x \in \operatorname{GL}_{2n}(F) \bigm\vert \exists \lambda \in F^\times \mbox{ such that } {}^tx J x=\lambda J \right\}.$
\end{itemize}
\end{definition}

We also recall the Kronecker product. 

\begin{definition}
Let $V_1, V_2$ be $F$-vector spaces. The Kronecker product of two endomorphisms $g_1 \in \operatorname{GL}(V_1)$ and $g_2 \in \operatorname{GL}(V_2)$ is the endomorphism $g_1 \otimes g_2$ of $V_1 \otimes_F V_2$ which acts as \[(g_1 \otimes g_2)(v_1 \otimes v_2) = (g_1 v_1) \otimes (g_2 v_2)\] on decomposable tensors, for all $v_1 \in V_1$ and $v_2 \in V_2$.
For integers $m,n \geq 1$ and for subgroups $G_1$ and $G_2$ of $\operatorname{GL}_m(F)$, $\operatorname{GL}_n(F)$ respectively, we write $G_1 \otimes G_2$ for the quotient of $G_1 \times G_2$ by the equivalence relation 
\[
(a,b) \sim (c, d)\text{ if and only if there exists }\lambda \in F^\times\text{ such that }c=\lambda a, \; d=\lambda^{-1} b.
\]
The group $G_1 \otimes G_2$ is in a natural way a subgroup of $\operatorname{GL}_{mn}(F)$, the inclusion being given by identifying $(g_1, g_2) \in G_1 \times G_2 / \sim$ with $g_1 \otimes g_2 \in \operatorname{GL}_{mn}(F)$: the equivalence relation $\sim$ ensures that this identification is well defined \cite[Proposition 1.9.8]{MR3098485}.
\end{definition}

Finally, we review the notion of semilinear map:
\begin{definition}\label{def: semilinear}
Let $V$ be an $F$-vector space. A map $f: V \rightarrow V$ is \textit{semilinear} if there exists a field automorphism $\theta \in \op{Aut}(F)$ such that
\[
f(\lambda_1v_1+\lambda_2v_2)=\theta(\lambda_1)f(v_1)+\theta(\lambda_2)f(v_2) \qquad \text{ for all } v_1,v_2 \in V, \,\, \lambda_1,\lambda_2 \in F.
\]
The general semilinear group of $V$ is the group of all invertible semilinear maps of $V$; it is isomorphic to $\op{GL(V)} \rtimes \op{Aut}(F).$ If $F=\mathbb{F}_{\ell^s}$, $s \geq 1$, then there is an embedding of the semilinear group into $\op{GL}_{s\dim V}(\mathbb{F}_\ell)$.
\end{definition}

\subsection{Maximal subgroups of $\operatorname{GSp}_{2n}(\mathbb{F}_\ell)$}
We now recall the classification of the maximal subgroups of $\operatorname{GSp}_{2n}(\mathbb{F}_\ell)$ for $\ell \geq 3$. 
We start with the notion of $m$-decomposition:
\begin{definition}[{cf.~\cite[§2.2.2]{MR3098485} and \cite[§4.2]{MR1057341}}]
Let $\ell$ be an odd prime and $m \geq 2$ be an integer. An $m$-decomposition of $\mathbb{F}_\ell^{2n}$ is the data of $m$ subspaces $V_1, \ldots, V_m$ of $\mathbb{F}_\ell^{2n}$, each of dimension $\frac{2n}{m}$, such that $\mathbb{F}_\ell^{2n} \cong \bigoplus_{i=1}^m V_i$.
\end{definition}

Before stating the classification theorem we need to define some of the \textbf{Aschbacher classes}. Class $\mathcal{C}_3$ concerns whether the group is semilinear over some field extension; the full power of this will not be needed for the proof of \Cref{thm_Main} so we only give the relevant property below, but this description will come in useful in \S \ref{sect: CompBounds} when we take an algorithmic approach.

\begin{definition}
\label{def_excepClasses}
A maximal subgroup $G$ of $\operatorname{GSp}_{2n}(\mathbb{F}_\ell)$ is said to be:
\begin{enumerate}
\item \textbf{reducible}, or of \textbf{class $\mathcal{C}_1$}, if it stabilises a linear subspace of $\mathbb{F}_\ell^{2n}$.
\item \textbf{imprimitive}, or of \textbf{class $\mathcal{C}_2$}, if there exists an $m$-decomposition $V_1,\ldots,V_m$ which is stable under the action of $G$ (i.e., for all $\GroupElement \in G$ and for all $1 \leq i \leq m$ there exists a $1 \leq j \leq m$ such that $\GroupElement V_i \subseteq V_j$) and the restriction of the standard symplectic form of $\mathbb{F}_\ell^{2n}$ to $V_i$ is either nondegenerate for every $1 \leq i \leq m$, or trivial for every $1 \leq i \leq m$.
\item \textbf{a field extension subgroup}, or of \textbf{class $\mathcal{C}_3$}, if it satisfies \cite[Definition 2.2.5]{MR3098485}. 
In this case, there exists a prime $s \mid 2n$ and a structure of $\mathbb{F}_{\ell^s}$-vector space on $\mathbb{F}_\ell^{2n}$ such that $G$ acts $\mathbb{F}_{\ell^s}$-semilinearly on $\mathbb{F}_{\ell}^{2n}$. Moreover, $G$ admits an index-$s$ subgroup $H$ that acts $\mathbb{F}_{\ell^s}$-linearly.

\item a \textbf{tensor product subgroup}, or of \textbf{class $\mathcal{C}_4$}, if there is a decomposition $\mathbb{F}_\ell^{2n} \cong V_1 \otimes V_2$ (where $V_1, V_2$ are $\mathbb{F}_\ell$-vector spaces of respective dimensions $n_1$ and $n_2 \geq 3$) such that for each $\GroupElement \in G$ there exist $\GroupElement_1 \in \operatorname{GL}(V_1)$ and $\GroupElement_2 \in \operatorname{GL}(V_2)$ that satisfy $\GroupElement=\GroupElement_1 \otimes \GroupElement_2$. Moreover, the action of $G$ on $V_1$, respectively $V_2$, preserves a symplectic, respectively symmetric, bilinear form up to scalars.
\item a \textbf{tensor induced subgroup}, or of \textbf{class $\mathcal{C}_7$}, if there exist positive integers $m, t$ with $2n=(2m)^t$ and a decomposition $\mathbb{F}_\ell^{2n} \cong V_1 \otimes V_2 \otimes \cdots \otimes V_t$ (where $V_1, \ldots, V_t$ are $\mathbb{F}_\ell$-vector spaces all of the same dimension $2m$) with the following property: for all $\GroupElement \in G$, there exists a permutation $\sigma \in \op{Sym}_t$ and operators $\GroupElement_1,\ldots,\GroupElement_t \in \operatorname{GSp}_{2m}(\mathbb{F}_\ell)$ such that \[\GroupElement(v_1 \otimes \cdots \otimes v_t)=(\GroupElement_{\sigma(1)}v_{\sigma(1)}) \otimes \cdots \otimes (\GroupElement_{\sigma(t)}v_{\sigma(t)}) \quad  \text{ for all } v_1 \in V_1, \ldots, v_t \in V_t. \] Such a $G$ is isomorphic to $\operatorname{GSp}_{2m}(\mathbb{F}_\ell)^{\otimes t} \rtimes \op{Sym}_t$.
\end{enumerate}
\end{definition}

We shall also have to deal with the exceptional class $\mathcal{S}$:
\begin{definition}[{cf.~\cite[Definition 2.1.3]{MR3098485}}]\label{def_ClassS}
A subgroup $G$ of $\operatorname{GSp}_{2n}(\mathbb{F}_\ell)$ is said to be of class $\mathcal{S}$ if and only if all of the following hold:
\begin{enumerate}
\item $\mathbb{P}G$ is almost simple;
\item $G$ does not contain $\operatorname{Sp}_{2n}(\mathbb{F}_\ell)$;
\item $G^{\infty}$ acts absolutely irreducibly on $\mathbb{F}_\ell^{2n}$;
\item $G^\infty$ preserves a nonzero symplectic form, but no nonzero unitary or quadratic forms.
\end{enumerate}
\end{definition}

\begin{remark}\label{rmk: constant groups vs Lie groups}
It is a general philosophy (cf.~for example \cite{Serre_resum8586}, especially §3, or \cite[Remark 2.1]{MR1969642}) that groups in class $\mathcal{S}$ should come in two different flavours. On one hand, there should exist finitely many groups that embed as maximal subgroups in $\mathbb{P}\operatorname{GSp}_{2g}(\mathbb{F}_\ell)$ for infinite families of primes $\ell$; we shall call them the \textbf{constant} (or \textbf{exceptional}) groups. On the other hand, there also exist maximal subgroups in class $\mathcal{S}$ \textbf{``of Lie type''}, obtained as follows. Given an algebraic group $\mathcal{G}$ over $\mathbb{Z}$ admitting an absolutely irreducible, symplectic representation of dimension $2n$, one can consider the corresponding map $\varphi : \mathcal{G} \xrightarrow{} \operatorname{GSp}_{2n,\mathbb{Z}}$ and the subgroup $\varphi\left(\mathcal{G}(\mathbb{F}_\ell)\right)$ of $\operatorname{GSp}_{2g}(\mathbb{F}_\ell)$: for sufficiently large $\ell$, all the maximal subgroups in class $\mathcal{S}$ that are not ``constant'' should be normalisers of groups of the form $\varphi\left(\mathcal{G}(\mathbb{F}_\ell)\right)$ for suitable $\mathcal{G}$ and $\varphi$ (and the groups $\mathcal{G}$ involved should be independent of $\ell$). 
We do not turn these notions into precise definitions, but it will be clear from \Cref{sect_ConstantGroups,sect_LieGroups} that there are indeed two different kinds of class $\mathcal{S}$ subgroups, and that we analyse them in different ways.
\end{remark}

\smallskip

We are now ready to state the following classification theorem, essentially due to Aschbacher (but see also \cite[Main Theorem and Table 3.5.C]{MR1057341} and \cite[§3]{MR779398}):

\begin{theorem}[{Aschbacher \cite{MR746539}}]\label{thm_Aschbacher}
Let $n$ be a positive integer, $\ell \geq 3$ be a prime, and $G$ be a maximal proper subgroup of $\operatorname{GSp}_{2n}(\mathbb{F}_\ell)$ not containing $\operatorname{Sp}_{2n}(\mathbb{F}_\ell)$. One of the following holds:
\begin{enumerate}
\item $G$ is of class $\mathcal{C}_1$;
\item $G$ is of class $\mathcal{C}_2$, stabilising an $m$-decomposition for some $m \geq 2$ dividing $2n$;
\item $G$ is of class $\mathcal{C}_3$ for some prime $s$ dividing $2n$;
\item $G$ is of class $\mathcal{C}_4$, and more precisely $G$ is contained in $\operatorname{GSp}_{2m}\left(\mathbb{F}_\ell \right) \otimes B$ where $B \subseteq \operatorname{GL}_t(\mathbb{F}_\ell)$ preserves a nondegenerate symmetric bilinear form up to scalars (so $B$ is bilinear: see \Cref{def: bilinear}), where $m$ and $t \geq 3$ are integers such that $2mt=2n$; 
\item $G$ is of class $\mathcal{C}_6$ with $2n=2^m$, and  $[\mathbb{P}G:\Omega_{2m}^{-}(2)] \in \{2^{2m+1}, 2^{2m+2},2^{2m+3}\}$. 
\item $G$ is of class $\mathcal{C}_7$ for some pair $(m,t)$ such that $(2m)^t=2n$ and $t \geq 3$ is odd;
\item $G$ is of class $\mathcal{S}$.
\end{enumerate}
\end{theorem}

\begin{remark}
Aschbacher's theorem is actually a much more general statement, giving a classification of the maximal subgroups of all finite classical groups, but in the present work we only need the case of $\operatorname{GSp}_{2n}(\mathbb{F}_\ell)$.
\end{remark}

\begin{proof}
We use Aschbacher's theorem \cite[Theorem 2.2.19]{MR3098485} with the descriptions of the classes and notation contained in \textit{loc.~cit.}~(whilst their focus is on $n\leq 6$, one can check that the classification holds for all $n$ \cite{MR1057341}, and \cite[§3]{MR779398}). In our case we are considering case $\mathbf{S}$ with $\Omega = \operatorname{Sp}_{2n}(\mathbb{F}_\ell)$ and $A$ contains the conformal group $\operatorname{GSp}_{2n}(\mathbb{F}_\ell)$. The theorem then states that any maximal subgroup of $\operatorname{GSp}_{2n}(\mathbb{F}_\ell)$ is contained in one of the geometric classes $\mathcal{C}_1, \ldots, \mathcal{C}_8$ or the exceptional class $\mathcal{S}$ given above.

Note the classes $\mathcal{C}_1, \mathcal{C}_2, \mathcal{C}_3$ and $\mathcal{C}_7$ are precisely as described in \Cref{def_excepClasses} (for class $\mathcal{C}_7$, see \cite[Table 2.10]{MR3098485} for the condition $t \equiv 1 \pmod 2$). Classes $\mathcal{C}_5$ and $\mathcal{C}_8$ cannot occur since $\ell$ is prime and odd respectively so it remains to justify the descriptions for classes $\mathcal{C}_4$ and $\mathcal{C}_6$.

For $\mathcal{C}_4$, one immediately finds from \cite[Table 2.7]{MR3098485} that $G \cap \operatorname{Sp}_{2n}(\mathbb{F}_\ell) \cong \operatorname{Sp}_{2m}\left(\mathbb{F}_\ell \right) \otimes \operatorname{GO}^{\varepsilon}_t\left( \mathbb{F}_\ell \right)$ with $2mt=2n$, $t \geq 3$, where $\operatorname{GO}^{\varepsilon}_t\left( \mathbb{F}_\ell \right)$ are various orthogonal subgroups of $\operatorname{GL}_t\left( \mathbb{F}_\ell \right)$ with $\varepsilon \in \{+,-,\circ \}.$ The corresponding maximal subgroups of $\operatorname{GSp}_{2n}( \mathbb{F}_\ell)$ are precisely the groups $\operatorname{GSp}_{2m}\left(\mathbb{F}_\ell \right) \otimes \operatorname{CGO}^{\varepsilon}_t\left( \mathbb{F}_\ell \right)$.

Finally we treat class $\mathcal{C}_6$ which necessitates that $2n$ is a prime power $2^m$. By the classification, the group $\mathbb{P}G$ is an extension of degree at most $2$ of a maximal subgroup $H < \mathbb{P}\operatorname{Sp}_{2g}(\mathbb{F}_\ell)$ of class $\mathcal{C}_6$. By \cite[Proposition 4.6.9]{MR1057341} or \cite[Table 2.9]{MR3098485}, the group $H$ is an extension of degree at most $2$ of $2^{2m+1}.\Omega^-_{2m}(2)$.

\end{proof}

\begin{remark}\label{rmk_OrdBizarreGroup}
We will not give a precise definition of the orthogonal group $\Omega_{2m}^{-}(2)$; all we need to know to prove \Cref{thm_Main} is that its order is given by (\cite[Theorem 1.6.22 and Table 1.3]{MR3098485})
\[
\#\Omega_{2m}^{-}(2) = 2^{m^2-m}(2^m+1)\prod_{i=1}^{m-1}(2^{2i}-1).
\]
In particular, in class $\mathcal{C}_6$ above we have
\[
\#\mathbb{P}G \leq 2^{2m+3} \cdot 2^{m^2-m} (2^m+1) \cdot \prod_{i=1}^{m-1}(2^{2i}-1) \leq (2^m+5)! =(2n+5)!.
\]

\end{remark}

The proof of \Cref{thm_Main} essentially consists in going through the list provided by \Cref{thm_Aschbacher} to show that, for $\ell$ large enough, $G_\ell$ cannot be contained in any proper maximal subgroup of $\operatorname{GSp}_{2g}(\mathbb{F}_\ell)$, and therefore the equality $G_\ell=\operatorname{GSp}_{2g}(\mathbb{F}_\ell)$ must hold.

\section{Reducible, imprimitive and field extension cases}\label{sect_EasyCases}
We now begin our elimination of the possible maximal subgroups. Recall from the introduction that we denote by $A/K$ an abelian variety of dimension $g$ with $\operatorname{End}_{\overline{K}}(A)=\mathbb{Z}$, and by $G_\ell$ the image of the representation \[\rho_\ell : \abGal{K} \to \operatorname{Aut} A[\ell] \cong \operatorname{GL}_{2g}(\mathbb{F}_\ell).\]
For $\ell > b(A/K)$, we know from \Cref{cor_Conclusion} that (up to a choice of basis) we have $G_\ell \subseteq \operatorname{GSp}_{2g}(\mathbb{F}_\ell)$. Suppose that $G_\ell$ does not contain $\operatorname{Sp}_{2g}(\mathbb{F}_\ell)$: then $G_\ell$ is contained in one of the maximal subgroups listed in \Cref{thm_Aschbacher}. The following proposition shows that $G$ cannot be of class $\mathcal{C}_1$, $\mathcal{C}_2$ or $\mathcal{C}_3$ for $\ell$ large enough:

\begin{proposition}\label{prop_EasyCases} Let $K$ be a number field and $A/K$ be a $g$-dimensional abelian variety such that $\operatorname{End}_{\overline{K}}(A)=\mathbb{Z}$. \hyp Let $G$ be a maximal proper subgroup of $\operatorname{GSp}_{2g}(\mathbb{F}_\ell)$ that contains $G_\ell$. Suppose $G$ is of class $\mathcal{C}_1, \mathcal{C}_2$ or $\mathcal{C}_3$: then $\ell \leq b(A/K; 2g)$, where $b$ is the function of \Cref{def_bFunction}.
\end{proposition}

\begin{proof}

Replacing $K$ with an extension $L$ of degree at most $2g$ we can assume that the Galois action stabilises a nontrivial subspace of $A[\ell]$ (classes $\mathcal{C}_1$ and $\mathcal{C}_2$), or that it acts on $A[\ell]$ by preserving an $\mathbb{F}_{\ell^s}$-linear structure for some $s > 1$ (class $\mathcal{C}_3$).
If the Galois action over $L$ preserves a nontrivial subspace, we obtain $\ell \leq b(A/L)$ by \cite[Lemma 3.17]{Surfaces} and \Cref{thm_Isogeny}.
If the image of Galois preserves an $\mathbb{F}_{\ell^s}$-structure, its centraliser in $\operatorname{End}_{\mathbb{F}_\ell}(A[\ell])$ strictly contains $\mathbb{F}_\ell$. In particular, the cokernel of the natural map $\mathbb{Z} = \operatorname{End}_L(A) \to \operatorname{End}_{\operatorname{Gal}(\overline{L}/L)}(A[\ell])$ has order divisible by $\ell$. By \cite[Theorem 1.9(4)]{MR4592752} we obtain $\ell \leq b(A/L) \leq b(A/K; 2g)$. 
\end{proof}

\section{The tensor product case}\label{sect_TensorProductsI}
We now consider the problem of showing that, for sufficiently large $\ell$, the group $G_\ell$ cannot be contained in a tensor product or tensor induced subgroup. 
Let us briefly explain the key idea behind the proof, which goes back to Serre (cf.~\cite{LettreVigneras}), using the notation introduced in §\ref{sect_FrobElem}.
If $G_\ell$ is contained in a tensor product subgroup, this forces the eigenvalues of every $\GroupElement \in G_\ell$ to satisfy a number of additional multiplicative relations besides the ``obvious'' ones arising from being an element of $\operatorname{GSp}_{2g}(\mathbb{F}_\ell)$, which are not generally present. Applying this to the image of a suitable Frobenius element $\GroupElement=\rho_\ell(\operatorname{Fr}_v)$, we can use these extra relations to bound $\ell$ via \Cref{prop_MultRelBoundsEll}.

We first make a definition that does not seem to appear in the literature but will suit our purposes. In particular, this definition will encompass the general symplectic and orthogonal groups that we use. With this in hand, we then prove a simple lemma that encapsulates the precise property we want.

\begin{definition}\label{def: bilinear}
We call a subgroup $B \subseteq \op{GL}_n(\mathbb{F}_\ell)$ \textit{bilinear} if it preserves a nondegenerate bilinear form up to scalars.
\end{definition}

\begin{lemma}\label{lem_involution}
Let $n\geq 1$ and let $B \subseteq \op{GL}_n(\mathbb{F}_\ell)$ be a bilinear subgroup. Then for all elements $b \in B$, there exists $\kappa \in \mathbb{F}_\ell^\times$ such that the multiset of eigenvalues of $b$ is invariant under the involution $\nu \mapsto \kappa/\nu$.
\end{lemma}

\begin{proof} Let $M$ be the matrix representing the bilinear form.
By assumption we have ${}^tb M b = \kappa \cdot M$ for some scalar $\kappa \in \mathbb{F}_\ell^\times$, hence $\kappa b^{-1} = M^{-1} \cdot {}^t b \cdot M$. Since the characteristic polynomial of a matrix is invariant under conjugation and transposition, we find that $\kappa b^{-1}$ and $b$ have the same characteristic polynomial. The claim follows.
\end{proof}

\begin{proposition}\label{prop:tensor products}
Let $\ell$ and $v$ be as in \Cref{assump_degreePolarization}. Let $n\geq 3$ and suppose that $B \subseteq \op{GL}_n(\mathbb{F}_\ell)$ is a bilinear subgroup. If $G_\ell$ is a contained in a tensor product group $\op{GSp}_{2m}(\mathbb{F}_\ell) \otimes B$, then $\ell \leq (2q_v)^{[F(v):\mathbb{Q}]}$.
\end{proposition}

\begin{proof}
Suppose for contradiction that $\ell > (2q_v)^{[F(v):\mathbb{Q}]}$. Up to conjugacy, for every element $\GroupElement$ of $G_\ell$ there exist operators $a \in \operatorname{GSp}_{2m}(\mathbb{F}_\ell)$ and $b \in B$ such that $\GroupElement=a \otimes b$. Let $\nu_1, \nu_2$ be eigenvalues of $a$ and let $\phi$ be the involution (given by \Cref{lem_involution}) on the the multiset $\Lambda$ of eigenvalues of $b$.

If $\Lambda$ contains a fixed point $\lambda$ of $\phi$, then for all $\phi$-stable triples $(\lambda, \lambda_1, \lambda_2)$ of eigenvalues of $b$, there exists an indexing such that $\{x_1,x_2,x_3 \} = \{ \nu_1\lambda, \nu_1\lambda_1, \nu_1\lambda_2 \}$ are eigenvalues of $\GroupElement$ satisfying $x_2^2=x_1x_3$.

If $\Lambda$ contains no fixed points, then $n \geq 4$ and let $\lambda_1, \lambda_2$ be eigenvalues of $b$ such that $\lambda_2 \neq \phi(\lambda_1)$. Set $x_1=\nu_1\lambda_1, x_2=\nu_1\lambda_2, x_3=\nu_2\phi(\lambda_1), x_4=\nu_2\phi(\lambda_2), x_5=\nu_2\lambda_1, x_6=\nu_2\lambda_2$ and observe that these are eigenvalues of $\GroupElement$ satisfying the relations
\[
x_1x_{3}=x_2x_4 \qquad \text{and} \qquad x_1x_6=x_2x_5.
\]
Now let $\GroupElement=\rho_\ell(\operatorname{Fr}_v)$ and in both cases lift the eigenvalues $x_i$ to distinct eigenvalues of $\op{Fr}_v$ by \Cref{lemma: semisimple Frob}. The contradiction then follows from \Cref{prop_MultRelBoundsEll}(1) and (2) respectively.
\end{proof}

\subsection{Tensor induced groups}

In the tensor product situation, we were able to force relations directly between the eigenvalues for arbitrary group elements. We now slightly generalise this notion to deal with the case when we don't have relations in the entire group but only in a subgroup of a certain shape. The subgroups we will choose have the property that a bounded power (depending only on $g$) of every group element lies in such a subgroup.

We will apply this statement in two ways: for semidirect products (for the tensor induced case); and via the outer automorphism group (in the exceptional class setting).

\begin{proposition}\label{prop_NoELargerThan1}
Let $\ell$ and $v$ be as in \Cref{assump_degreePolarization}. Let $H$ be a subgroup of $\operatorname{GSp}_{2g}(\mathbb{F}_\ell)$ having a subgroup $J \triangleleft H$ contained in $\operatorname{GSp}_{2m}(\mathbb{F}_\ell) ^{\otimes t}$ where $2g=(2m)^t$, $t\geq 3$. Let $N \geq 1$ be such that, for every $h \in H$, there exists $n$ with $1 \leq n \leq N$ such that $h^n \in J$. If $G_\ell$ is contained in $H$, then $\ell \leq (2q_v^N)^{[F(v) : \mathbb{Q}]}$.

\end{proposition}

\begin{proof}
Fix $h \in H$ and let $n$ be as in the statement. We have $y := h^n \in J \subseteq \operatorname{GSp}_{2m}(\mathbb{F}_\ell) ^{\otimes t}$. Note that {$\operatorname{GSp}_{2m}(\mathbb{F}_\ell) ^{\otimes t} = \operatorname{GSp}_{2m}(\mathbb{F}_\ell) \otimes \operatorname{GSp}_{2m}(\mathbb{F}_\ell)^{\otimes (t-1)}$ where the second factor is symplectic (resp.~orthogonal) if $t$ is even (resp.~odd).} 
As in the proof of \Cref{prop:tensor products} with $B=\operatorname{GSp}_{2m}(\mathbb{F}_\ell)^{\otimes (t-1)} \subset \op{GL}_{g/m}(\mathbb{F}_\ell)$, we then see that $y$ has either eigenvalues $x_1, x_2, x_3$ such that $x_2^2=x_1x_3$, or eigenvalues $x_1, \ldots, x_6$ such that $x_1x_3=x_2x_4$ and $x_1x_6=x_2x_5$. We apply this to $h = \rho_\ell(\operatorname{Fr}_v)$: the $x_i$ are $n$-th powers of eigenvalues $\overline{\mu}_i$ of $\rho_\ell(\operatorname{Fr}_v)$, and we obtain
\[
\overline{\mu}^{2n} = \overline{\mu}_1^n\overline{\mu}_3^n \quad\text{ or }\quad \begin{cases}
    \overline{\mu}_1^n\overline{\mu}_3^n = \overline{\mu}_2^n\overline{\mu}_4^n \\
    \overline{\mu}_1^n\overline{\mu}_6^n = \overline{\mu}_2^n\overline{\mu}_5^n.
\end{cases}
\]
Since $n \leq N$, lifting the $\overline{\mu}_i$ to distinct roots of $f_v(x)$ and applying \Cref{prop_MultRelBoundsEll}(1) and (2) concludes the proof.
\end{proof}

We apply \Cref{prop_NoELargerThan1} immediately to provide a bound for $G_\ell$ to be contained in a tensor induced subgroup using the function 
\begin{equation}\label{eq: definition of the function xi}
    \xi(t):=\max_{\sigma \in \op{Sym}_t} \operatorname{order} (\sigma),
\end{equation}
which will be bounded later.

\begin{proposition}\label{prop: NoInducedTensors rewrite}
Let $\ell$ and $v$ be as in \Cref{assump_degreePolarization}. Suppose $\ell > (2q_v^{\xi(\lfloor \log_2(2g)\rfloor)})^{[F(v):\mathbb{Q}]}$.
Then the group $G_\ell$ is not contained in a tensor induced subgroup $\operatorname{GSp}_{2m}(\mathbb{F}_\ell)^{\otimes t} \rtimes \op{Sym}_t$, where $m \geq 1, t \geq 3$ are integers such that $(2m)^t=2g$.
\end{proposition}

\begin{proof}
Apply \Cref{prop_NoELargerThan1} with $H=\operatorname{GSp}_{2m}(\mathbb{F}_\ell)^{\otimes t} \rtimes \op{Sym}_t$ and $J=\operatorname{GSp}_{2m}(\mathbb{F}_\ell) ^{\otimes t}$. By definition, we can take $N=\xi(t)$. Since $2g=(2m)^t$, we have $t \leq \lfloor \log_2(2g) \rfloor$, and we are done. 
\end{proof}

\section{Constant groups in class \texorpdfstring{$\mathcal{S}$}{S}} \label{sect_ConstantGroups}

In this section, we consider maximal subgroups of class $\mathcal{S}$ whose projective image has order bounded independently of $\ell$ (we call these the \textit{constant} groups in class $\mathcal{S}$, see \Cref{rmk: constant groups vs Lie groups}). The analysis of the constant subgroups of $\operatorname{GSp}_{2g}(\mathbb{F}_\ell)$ is greatly simplified by a theorem of Collins, but prior to stating it we first define the subgroup $E_\ell(G)$.

\begin{definition}
Let $G$ be a group and $\ell$ a prime. We define two characteristic subgroups of $G$ as follows:
\begin{itemize}
   \item The Bender subgroup $E(G)$ which is generated by subnormal subgroups of $G$ that are quasisimple; it is the central product of such subgroups.
   \item The subgroup $E_\ell(G)$  generated by normal perfect subgroups $E$ of $E(G)$ such that $E/Z(E)$ is a simple group of Lie type in characteristic $\ell$.
\end{itemize}
\end{definition}

\begin{theorem}[{\cite[Theorem B]{MR2456628}}]\label{thm: Collins}
Let $G \subseteq \operatorname{GL}_{n}(\mathbb{F}_\ell)$ be a primitive subgroup. There exists an explicit constant $J(n)$ such that
$[G:Z(G)\cdot E_{\ell}(G)] \leq J(n)$. Moreover, one can take $J(n)=(n+5)!$.
\end{theorem}

\begin{remark}
Collins' result is actually sharper in general: for instance, if $n \geq 13$ and $\ell \nmid (n+2)$, then one can take $J(n)=(n+1)!$.
\end{remark}

Since we have already dealt with imprimitive subgroups in \S\ref{sect_EasyCases}, we may restrict to the primitive setting now in order to apply the above theorem.

\begin{proposition}\label{prop_ConstantOrLie}
Suppose that $G \subseteq \operatorname{GSp}_{2g}(\mathbb{F}_\ell)$ is a maximal primitive subgroup of class $\mathcal{S}$ that satisfies $\#\mathbb{P}G > J(2g)$. The socle of $\mathbb{P}G$ is a simple group of Lie type in characteristic $\ell$.
\end{proposition}

\begin{proof}
Define the subgroups $\Gamma_1=Z(G) \cdot E_\ell(G)$ and $\Gamma_2=Z(G) \cdot Z(E_\ell(G))$. Observe that since $E_\ell(G)$ is a characteristic subgroup of $G$, both $\Gamma_1$ and $\Gamma_2$ are also characteristic and hence normal in $G$. Moreover $\Gamma_1/\Gamma_2 \cong E_\ell(G)/Z(E_\ell(G))$; since the Bender subgroup is a central product of quasisimple groups, $\Gamma_1/\Gamma_2$ is a direct product of simple groups of Lie type in characteristic $\ell$ from definitions. Note that $\mathbb{P}\Gamma_2$ is trivial (it is a normal abelian subgroup of $\mathbb{P}G$; use \Cref{lemma_NoNormalAbelianSubgroups} and the fact that $\mathbb{P}G$ is almost simple by definition of class $\mathcal{S}$) hence $\mathbb{P}\Gamma_1$ is a normal subgroup of the almost simple group $\mathbb{P}G$. Moreover our hypotheses imply that $\mathbb{P}\Gamma_1$ is nontrivial since  $\#\mathbb{P}G \leq J(2g) \cdot \#\mathbb{P}\Gamma_1$ by \Cref{thm: Collins}.

Since $\operatorname{soc} (\mathbb{P}G)$ is the unique minimal normal subgroup of $\mathbb{P}G$, we have $\operatorname{soc} (\mathbb{P}G) \subseteq \mathbb{P}\Gamma_1 \subseteq \mathbb{P}G$. This implies $\operatorname{soc} (\mathbb{P}G) = \left(\operatorname{soc} (\mathbb{P}G) \right)^\infty \subseteq \left(\mathbb{P}\Gamma_1\right)^\infty \subseteq \left(\mathbb{P}G\right)^\infty = \operatorname{soc} (\mathbb{P}G)$ by \Cref{lemma_socGIsPerfect}, so in particular, $\operatorname{soc} (\mathbb{P}G) = \left(\mathbb{P}\Gamma_1\right)^\infty$. On the other hand, $\mathbb{P}\Gamma_1$ is a product of simple nonabelian groups, hence is perfect, so $\left(\mathbb{P}\Gamma_1\right)^\infty = \mathbb{P}\Gamma_1 = \operatorname{soc} (\mathbb{P}G)$ is simple by definition. Thus, $\mathbb{P}\Gamma_1$ is a simple group that is a direct product of finite simple groups of Lie type in characteristic $\ell$: it follows immediately that $\mathbb{P}\Gamma_1$ is itself a finite simple group of Lie type in characteristic $\ell$.
\end{proof}

As a consequence, we can easily show that $G_\ell$ is not contained in a constant group of class $\mathcal{S}$ for $\ell$ larger than an explicit bound. We formulate this by saying that, for $\ell$ large enough, if $G_\ell$ is contained in a maximal subgroup $G$ of class $\mathcal{S}$, then $\operatorname{soc} (\mathbb{P}G)$ is a simple Lie group in characteristic $\ell$ (and therefore not a constant group).

\begin{proposition}\label{prop_UnconditionalConstantGroups}
\hyp For every positive integer $N$ we have $\#\mathbb{P}G_\ell > N$ so long as $\ell$ is strictly larger than $b(A/K;N)^{1/(2g-1)}$. If $\ell$ is larger than $b(A/K;J(2g))^{1/(2g-1)}$, and if $G$ is a maximal subgroup of $\operatorname{GSp}_{2g}(\mathbb{F}_\ell)$ of class $\mathcal{S}$ such that $G_\ell \subseteq G$ and $G$ is primitive, then $\mathbb{P}G$ is an almost simple group with socle of Lie type in characteristic $\ell$.
\end{proposition}

\begin{proof}
Let $K'$ be the extension of $K$ which is the fixed field of $\ker\left(\abGal{K} \to \mathbb{P}G_\ell \right)$. By construction, the image of $\abGal{K'}$ in $\operatorname{Aut} A[\ell]$ consists entirely of scalars, so over $K'$ the representation $A[\ell]$ admits an invariant subspace of dimension 1 (in fact, any subspace will do). Repeating the argument of \cite[Lemma 3.17]{Surfaces} we find the desired result (in the notation of that proof we have $\#H=\ell$).
The second part of the statement then follows from \Cref{prop_ConstantOrLie}, since under these hypotheses we have $\#\mathbb{P}G \geq \#\mathbb{P}G_\ell > J(2g)$.
\end{proof}

\newcommand{\MaxOrderOut}{{4(r+1)}}
\newcommand{\TwiceMaxOrderOut}{{8(r+1)}}
\newcommand{\ThriceMaxOrderOut}{{12(r+1)}}
\newcommand{\ThreeHalvesMaxOrderOut}{{6(r+1)}}
\newcommand{\HalfMaxOrderOut}{{2(r+1)}}

\section{Class \texorpdfstring{$\mathcal{S}$}{S} subgroups of Lie type}\label{sect_LieGroups}
In this section we study the maximal subgroups of $\operatorname{GSp}_{2g}(\mathbb{F}_\ell)$ of class $\mathcal{S}$ ``of Lie type''. More precisely, in view of the result of \Cref{prop_ConstantOrLie}, we are interested in the maximal class $\mathcal{S}$ subgroups $H$ of $\operatorname{GSp}_{2g}(\mathbb{F}_\ell)$ such that $\operatorname{soc}\left(\mathbb{P}H\right)$ is a simple group of Lie type in characteristic $\ell$.
We will study these groups by comparing their representation theory with that of certain algebraic groups, so we begin by reviewing some generalities about the latter.
From now on, to avoid the pathologies associated with finite Suzuki and Ree groups, we assume $\ell \neq 2,3$.

Throughout this section we let $G/\overline{\mathbb{F}}_\ell$ be a simple, simply connected algebraic group of rank $r$ in characteristic $\ell \geq 5$. We fix a maximal torus $T$ of $G$ and write $\Lambda \cong \mathbb{Z}^r$ for its character group and $\left\{\alpha_1,\ldots,\alpha_r \right\} \subset \Lambda$ for its simple roots. There is an inner product $(\cdot,\cdot)$ on the vector space $\Lambda \otimes \mathbb{R}$, well-defined up to rescaling, that makes it into a Euclidean space.

\subsection{Preliminaries on algebraic groups and root systems}

\begin{definition}\label{def_weights}
For $G$ as above we use the following definitions:
\begin{itemize}
    \item An element $\lambda \in \Lambda$ is called a \textbf{weight}.
    \item The \textbf{fundamental weights} $\omega_1,\cdots,\omega_r$ are the dual basis of $\{\alpha_i^\vee\}_{1 \leq i \leq r}$, where  $\alpha^\vee:= \frac{2\alpha}{(\alpha,\alpha)}$; these form an integral basis of $\Lambda$ since $G$ is simply connected.

\item We call a weight $\lambda=\sum_{i=1}^r a_i\omega_i$ \textbf{dominant} if $a_i\geq 0$ for all $1 \leq i \leq r$.
    \item Given a positive integer $m$, a weight $\lambda=a_1\omega_1+ \cdots+a_r\omega_r$ is \textbf{$m$-restricted} if $0 \leq a_i \leq m-1$ for all $1\leq i \leq r$.

\item There is a partial ordering on $\Lambda$: $\lambda \succeq \mu$ if $\lambda - \mu$ is a linear combination of positive roots with non-negative real coefficients.

\item A dominant weight $\lambda \neq 0$ is \textbf{minuscule} if, for every positive root $\alpha$ of $G$, we have $(\lambda, \alpha^\vee) \in \{0, 1\}$.

\item Every algebraic representation $V$ of $G$ can be restricted to a representation of $T$. The action of $T$ on $V$ is diagonalisable, so we have
\begin{equation}\label{eq: weight spaces}
V \cong \bigoplus_{\lambda \in \Lambda} V_\lambda \quad \text{where} \quad V_\lambda = \{v \in V : t \cdot v = \lambda(t) v \quad \forall t \in T(\overline{\mathbb{F}}_\ell) \}.
\end{equation}

If $V_\lambda \neq \{0\}$, we say that $\lambda$ is a \textbf{weight of the representation $V$}.

\end{itemize}
\end{definition}

\begin{remark}\label{rmk: eigenvalues of semisimple elements}
Let $\rho : G \to \operatorname{GL}_V$ be a representation of $G$ and $s \in G(\overline{\mathbb{F}}_\ell)$ be a semisimple element. It is well known that $s$ is conjugate to an element $t$ in $T(\overline{\mathbb{F}}_\ell)$: indeed, $s$ is contained in some maximal torus, and all maximal tori are conjugate in $G_{\overline{\mathbb{F}}_\ell}$, see \cite[Section 21.3, Corollary A]{MR396773}. The eigenvalues of $\rho(s)$ coincide with those of $\rho(t)$, which by \Cref{eq: weight spaces} are given by $\lambda(t)$ for $\lambda$ weight of $V$ (the eigenvalue $\lambda(t)$ is counted with multiplicity $\dim V_\lambda$).
\end{remark}

Our interest in dominant weights arises from their use in classifying irreducible representations. The key statement we need is the following:

\begin{theorem}[{Theorem of the highest weight, \cite[Theorem on p.~190]{MR396773}}]
   There is a bijection between finite dimensional irreducible $\overline{\mathbb{F}}_\ell[G]$-modules $V$ and dominant weights $\lambda$, sending $V$ to its unique dominant weight that is maximal with respect to the partial ordering on $\Lambda^+$.
\end{theorem}
We will henceforth write $L(\lambda)$ for the irreducible module corresponding to $\lambda$.

\subsection{Representation theory of finite simple groups of Lie type}\label{sec_RepTheoryFiniteLieGroups}
Our main reference for this section is \cite{MR1901354}; further information can be found in \cite{MR794307}, Chapter 1 (especially sections 1.17-1.19). 

\begin{definition}\label{def: Frobenius maps}
Let $e$ be a positive integer and $q=\ell^e$. The $q$-Frobenius map of $\operatorname{GL}_n(\overline{\mathbb{F}}_\ell)$, denoted by $F_q$, is the automorphism of $\operatorname{GL}_n(\overline{\mathbb{F}}_\ell)$ that raises all coefficients of a matrix to the $q$-th power.
The \textbf{standard Frobenius map} (relative to $q$) is a group morphism $F:G(\overline{\mathbb{F}}_\ell) \to G(\overline{\mathbb{F}}_\ell)$ such that $i \circ F=F_q \circ i$, for some embedding $i:G(\overline{\mathbb{F}}_\ell) \hookrightarrow \operatorname{GL}_n(\overline{\mathbb{F}}_\ell)$ induced by an algebraic map $G \hookrightarrow \operatorname{GL}_{n, \overline{\mathbb{F}}_\ell}$. It depends only on $q$ \cite[§1.3]{MR2199819}, so we will also denote it by $F_q$ by abuse of notation.
A homomorphism $G(\overline{\mathbb{F}}_\ell) \to G(\overline{\mathbb{F}}_\ell)$ is a \textbf{Frobenius endomorphism} if some power of it is the standard Frobenius map relative to some power of $\ell$.
\end{definition}

\begin{notation}\label{rmk_InvariantQ}
Let $\tilde{G}$ be a finite simple group of Lie type in characteristic $\ell \geq 5$.
To such a group $\tilde{G}$ we associate:
\begin{itemize}
    \item a connected, reductive, simple, and simply connected algebraic group $G/\overline{\mathbb{F}}_\ell$;
    \item a Frobenius endomorphism $F$ of $G$ such that $\tilde{G} \cong \Gq/Z$, where we write $\Gq=\left\{g\in G(\overline{\mathbb{F}}_\ell) \bigm\vert F(g)=g \right\}$ for the $F$-fixed points and $Z=Z(\Gq)$ for its centre;
    \item a positive integer invariant $q=\ell^e$.
\end{itemize}

We will use these associated properties of $\tilde{G}$ without further reference.

\end{notation}

\begin{remark}
The integer $q$ arises from the choice of automorphism of the Dynkin diagram that determines $F$; one can show that it is a power of $\ell$ in our setup (see for example \cite[§1.3]{MR2199819}).
\end{remark}

Our interest in this data comes from the fact that projective representations of $\tilde{G}$ in characteristic $\ell$ correspond bijectively to linear representations of $\Gq$ in characteristic $\ell$ (\cite[p.~77, item (x)]{MR0466335}), which in turn can be constructed by restricting algebraic representations of the algebraic group $G$ to $\Gq$, as we now describe. 

\begin{theorem}[{Steinberg \cite[Theorem 43 in §13]{MR0155937}, \cite[Proposition 3.10 on p.~219]{MR2015057}}]\label{thm_Steinberg} Let $G$, $\Gq$ and $q$ be as above (with the restriction $\ell \geq 5$) and let $\omega_i$, $1 \leq i \leq r$, be the fundamental weights of $G$. Let 
\[
\Lambda_q=\left\{ a_1\omega_1+\cdots+a_r \omega_r \bigm\vert 0 \leq a_i \leq q-1 \text{ for } 1 \leq i \leq r \right\}
\]
be the set of $q$-restricted weights.
The restrictions of the $G$-modules $L(\lambda)$ with $\lambda \in \Lambda_q$ to $\Gq$ form a set of pairwise inequivalent representatives of all equivalence classes of irreducible $\overline{\mathbb{F}}_\ell[\Gq]$-modules.
\end{theorem}

The following theorem illustrates the importance of $\ell$-restricted weights: 
\begin{theorem}[{Steinberg's twisted tensor product theorem \cite[Theorem 41 in §11]{MR0155937}, \cite[Corollary on p.~224]{MR2015057}}]\label{thm_STTPT}
If $M$ is a $G$-module corresponding to the representation $\rho$, let $M^{(i)}$ be the module corresponding to the representation $\rho \circ F_\ell^i$ (see \Cref{def: Frobenius maps}).
If $\lambda_0,\ldots,\lambda_m$ are $\ell$-restricted weights, then
\[
L(\lambda_0+\ell\lambda_1+\cdots+\ell^m\lambda_m) \cong L(\lambda_0) \otimes L(\lambda_1)^{(1)} \otimes \cdots \otimes L(\lambda_m)^{(m)}.
\]
\end{theorem}

We will be interested in the weights of $L(\lambda)$. 
Recall that the Weyl group $W(G)$ of $G$ is the subgroup of $\operatorname{GL}(\Lambda \otimes \mathbb{R})$ generated by the reflections along the simple roots.

\begin{theorem}\label{thm_Premet}
Let $G$ be a simple, simply connected algebraic group in characteristic $\ell \geq 5$. Let $\lambda$ be an $\ell$-restricted weight. 
\begin{enumerate}
    \item The set of weights of the irreducible $G$-module $L(\lambda)$ is the union of the $W(G)$-orbits of dominant weights $\mu$ that satisfy $\mu \preceq \lambda$.
    \item Let $\alpha$ be a positive root and $k=(\lambda, \alpha^\vee) \in \mathbb{Z}_{\geq0}$. The weights $\lambda, \lambda-\alpha, \ldots, \lambda-k \alpha$ all appear in $L(\lambda)$.
    \end{enumerate}
\end{theorem}
\begin{proof}
By a result of Premet \cite{MR905003}, \cite[Theorem on p.~23]{MR2199819}, under our assumption $\ell \geq 5$ the weights of $L(\lambda)$ coincide with the weights of the Weyl module $V(\lambda)$ (possibly with different multiplicities). That the weights of $V(\lambda)$ are those described in the statement is well known, see for example \cite[Equation (1) on p.~22]{MR2199819}. For part (2) see \cite[§21.3]{MR499562} (the statement there is given for $V(\lambda)$, but $L(\lambda)$ has the same weights as $V(\lambda)$).

\end{proof}

\Cref{thm_Steinberg,thm_STTPT} are all we need to describe representations over $\overline{\mathbb{F}}_\ell$. However, we will also need to know when an $\overline{\mathbb{F}}_\ell$-representation $\rho$ can be defined over $\mathbb{F}_\ell$. 
Observe that every irreducible $\overline{\mathbb{F}}_\ell[\Gq]$-module $M$ corresponds to an irreducible $\overline{\mathbb{F}}_\ell[G]$-module $M \cong L(\lambda)$ by \Cref{thm_Steinberg} for some $q=\ell^e$-restricted $\lambda$; \Cref{thm_STTPT} enables us to decompose this as $M=\bigotimes_{i=0}^{e-1} M_i^{(i)}$ where the $M_i$ correpsond to $\ell$-restricted weights.

\begin{proposition}\label{prop_Descent}
With the above notation, the $\Gq$-module $M$ can be defined over $\mathbb{F}_{\ell}$ if and only if $M_i \cong M_j$ for all $i,j$.
\end{proposition}
\begin{proof}
This is the case $f=1$ of \cite[Proposition 5.4.6]{MR1057341} (using \cite[Remark 5.4.7]{MR1057341} if $\Gq$ is of type ${}^3D_4$). See also the proof of \cite[Theorem 5.1.13]{MR3098485}.
\end{proof}

\subsection{Restrictions on the Lie type}

We also need some information about the duality properties of $L(\lambda)$. We have the following result of Steinberg (\cite[Lemmas 78 and 79]{MR0466335}, cf.~also \cite[Appendix A.3]{MR1901354}):
\begin{theorem}\label{thm_Duality}
Assume $\ell \geq 5$. Let $\lambda=\sum_{i=1}^r a_i\omega_i$ be a $q$-restricted weight.
\begin{itemize}
\item If $G$ is of type $A_r$ with $r \geq 2$, or $D_r$ with odd $r$, or $E_6$, the representations $L\left( \sum_{i=1}^r a_i\omega_i\right)$ and $L\left(\sum_{i=1}^r a_{\tau(i)}\omega_i \right)$, where the permutation $\tau$ is given by the automorphism of order two of the Dynkin diagram, are dual to each other. For any other $G$, all representations $L(\lambda)$ are self-dual.
\item There is an element $h \in Z(G(\overline{\mathbb{F}}_\ell))$ with $h^2=\operatorname{id}$ such that every self-dual module $L(\lambda)$ is symplectic if and only if $h$ acts nontrivially on $L(\lambda)$.
\end{itemize}
\end{theorem}

It is then relatively easy to work out which representations $L(\lambda)$ are symplectic.
\begin{remark}
\Cref{thm_Duality} seems to be misquoted in \cite{MR1901354}, and as a consequence, the algorithm described in that paper to decide whether $L(\lambda)$ is symplectic or orthogonal does not give correct results (for example, it implies the existence of symplectic representations of $\operatorname{Spin}(7,\mathbb{F}_p)$ for all sufficiently large primes $p$ \cite[Appendix A.4]{MR1901354}), which is not the case).
\end{remark}

The following result follows from \Cref{thm_Duality} (see also the proof of \cite[Proposition 22]{MR3376151} and \cite[Exercise 4.3.13]{MR1064110}). The numbering of the simple roots is that of \cite{MR1901354}.

\begin{corollary}\label{cor_Symplectic}
Assume $\ell \geq 5$. In the situation of \Cref{thm_Duality}, if the representation $L(\lambda)$ of the finite group of Lie type $\Gq$ is symplectic, one of the following holds:

\begin{itemize}
 \item $G$ is of type $A_r$, $r \equiv 1 \bmod 4$, $a_i=a_{r+1-i}$ for $i=1,\ldots,r$, and $a_{(r+1)/2}$ is odd; or
 \item $G$ is of type $B_r$, $r \equiv 1,2 \bmod 4$, and $a_1$ is odd; or
 \item $G$ is of type $C_r$, and $\begin{cases}
a_1+a_3+\ldots+a_r \,\, \text{ is odd} & \quad \text{if $r$ is odd;}  \\
a_2+a_4+\ldots+a_r \,\, \text{ is odd} & \quad \text{if $r$ is even; or}
 \end{cases}$

\item $G$ is of type $D_r$, $r \equiv 2 \bmod 4$, and $a_{1}+a_2$ is odd; or
 \item $G$ is of type $E_7$ and $a_2+a_4+a_7$ is odd.
 \end{itemize}
\end{corollary}

\begin{corollary}\label{cor_NoPSL2q}
Let $q=\ell^e$ be the invariant attached to $\Gq$, and let $M$ be an absolutely irreducible, symplectic $\mathbb{F}_\ell[\Gq]$-module. The Lie type of $G$ is listed in \Cref{cor_Symplectic} and one of the following holds:
\begin{enumerate}
\item $e=1$, that is, $q=\ell$, and we have $M\cong L(\lambda)$ for some $\ell$-restricted weight $\lambda$; or
\item $e>1$ is odd, $\dim_{\mathbb{F}_\ell} M$ is a perfect $e$-th power, $M \otimes_{\mathbb{F}_\ell} \overline{\mathbb{F}}_\ell$ is a tensor-decomposable $\Gq$-module and the rank of $G$ is at most $(\dim_{\mathbb{F}_\ell} M)^{1/e}-1$.
\end{enumerate}
\end{corollary}

\begin{proof}
Let $\overline{M}:=M \otimes \overline{\mathbb{F}}_\ell$ and $\lambda$ be the associated $q$-restricted weight. Writing $\lambda=\sum_{i=0}^{e-1} \ell^i \lambda_i$, where each $\lambda_i$ is $\ell$-restricted, we have $\overline{M} \cong \bigotimes_{i=0}^{e-1} L(\lambda_i)^{(i)}$ by \Cref{thm_STTPT}. Since $\overline{M}$ can be defined over $\mathbb{F}_\ell$, \Cref{prop_Descent} gives $L(\lambda_i) \cong L(\lambda_j)$ for all $i,j$ hence $\dim (L_\lambda)=(\dim L(\lambda_0))^e$. If $e=1$ we are in case (1) and are done. 

If instead $e>1$ (in which case $\overline{M}$ is tensor-decomposable), then we need to show that $e$ is odd. Suppose by contradiction that $e$ is even. As noted in the discussion following \cite[Proposition 5.1.12]{MR3098485}, the fact that $\overline{M}$ is self-dual implies that each $L(\lambda_i)^{(i)}$ is self-dual, hence in particular $L(\lambda_0)$ is self-dual. By \Cref{thm_Duality} there is an element $h$ in the center of $\Gq$ that acts nontrivially on $L(\lambda_0)$ if and only if the latter is symplectic. Since $h$ is central and $L(\lambda_0)$ is absolutely irreducible, $h$ acts on $L(\lambda_0)$ as a scalar $\mu$; furthermore, since the order of $h$ is at most 2, we have $\mu=\pm 1$. It follows that $h$ acts on $\overline{M} \cong \otimes_{i=0}^{e-1} L(\lambda_0)^{(i)}$ as $\mu^e$, so $\overline{M}$ is symplectic if and only if $\mu=-1$ and $e$ is odd. The simple group $G$ admits a map with finite kernel to $\operatorname{GL}_{L(\lambda_0)} \otimes \cdots \otimes \operatorname{GL}_{L(\lambda_0)^{(e-1)}}$, and therefore, by semisimplicity, to $\operatorname{SL}_{L(\lambda_0)} \otimes \cdots \otimes \operatorname{SL}_{L(\lambda_0)^{(e-1)}}$. Since $\operatorname{SL}_n$ has rank $n-1$, and we have a map with trivial kernel to $\operatorname{SL}_{L(\lambda_0)}$, we get that $r \leq (\dim_{\mathbb{F}_\ell} M)^{1/e}-1$.
\end{proof}

We now concentrate on the abelian variety application and provide bounds on $\ell$ such that $G_\ell$ is not contained in a maximal subgroup of class $\mathcal{S}$ when the the invariant $q$ that we associate is prime. To this end, we set some notation for the remainder of this section. 

\begin{notation}\label{notation: Lie}
Let $K$ be a number field and consider an abelian variety $A/K$ such that $\operatorname{End}_{\overline{K}}(A)=\mathbb{Z}$. Let $H$ be a maximal subgroup of $\operatorname{GSp}_{2g}(\mathbb{F}_\ell)$ of class $\mathcal{S}$ such that $S := \operatorname{soc} (\mathbb{P}H)$ is a finite simple group of Lie type in characteristic $\ell$. Write $S=\Gq/Z$ and $q=\ell^e$ and let $r$ be the rank of $G$. By definition of class $\mathcal{S}$, the linear representation $A[\ell]$ of $G^F$ lifting the projective representation $\mathbb{P}A[\ell]$ of $G^F/Z$ is absolutely irreducible. Let $\lambda$ be the $q$-restricted weight such that $A[\ell] \otimes_{\mathbb{F}_\ell} \overline{\mathbb{F}}_\ell \cong L(\lambda)$ (see \Cref{thm_Steinberg}).    
\end{notation}

We also require some information about the outer automorphisms of simple groups of Lie type.

\begin{lemma}\label{lemma_IndexSocle}
Using \Cref{notation: Lie}, let $q=\ell^e$ where $\ell \geq 5$ is prime. The exponent of $\operatorname{Out}(S)$ divides $4e(r+1)$.
\end{lemma}

\begin{proof}
The Lie type of $\Gq$ is listed in \Cref{cor_Symplectic}. For each possible Lie type we read from \cite[§1.7.2]{MR3098485} (and \cite[p.~xvi, Table 5]{MR827219} for the case of $E_7(\ell^e)$) the structure of $\operatorname{Out}(S)$, cf.~\Cref{table: outer automorphism groups}. In every case, the exponent of $\operatorname{Out}(S)$ divides $4e(r+1)$. 
\end{proof}

\begin{table}[h!]
\begin{center}
\begin{tabular}{|c|c|c|c|}
\hline Family & Conditions & $\begin{matrix} \text{Simple group} \\ \text{(notation as in \cite{MR3098485})} \end{matrix}$ & $\operatorname{Out}(S)$\\[2pt] \hline
$A_1$ & & $L_2(\ell^e)$ & $\mathbb{Z}/2\mathbb{Z} \times \mathbb{Z}/e\mathbb{Z}$ \\
$A_r$ & $r \geq 2$ & $L_{r+1}(\ell^e)$ & $\mathbb{Z}/(\ell^e-1,r+1)\mathbb{Z} \rtimes \mathbb{Z}/2e\mathbb{Z}$ \\ \hline

$^2A_r$ & $r \geq 2$ & $U_{r+1}(\ell^e)$ & $\mathbb{Z}/(\ell^e+1,r+1)\mathbb{Z} \rtimes \mathbb{Z}/2e\mathbb{Z}$ \\ \hline

$B_r$ & & $O^{\circ}_{2r+1}(\ell^e)$ & $\mathbb{Z}/2\mathbb{Z} \times \mathbb{Z}/e\mathbb{Z}$ \\ \hline
$C_r$ & & $S_{2r}(\ell^e)$ & $\mathbb{Z}/2\mathbb{Z} \times \mathbb{Z}/e\mathbb{Z}$ \\ \hline
$D_r$ & $r \equiv 2 \pmod{4}, \; r\geq 6$ & $O^{+}_{2r}(\ell^e)$ & $D_4 \times \mathbb{Z}/e\mathbb{Z}$ \\ 
\hline

$^2D_{r}$ & $r \equiv 2 \pmod{4}, \; r\geq 6$ & $O^{-}_{2r}(\ell^e)$ & $\mathbb{Z}/2\mathbb{Z} \times \mathbb{Z}/2e\mathbb{Z}$ \\ 

\hline

$E_7$ & & $E_7(\ell^e)$ & $\mathbb{Z}/2\mathbb{Z} \times \mathbb{Z}/e\mathbb{Z}$ \\ \hline
\end{tabular}
\end{center}
\caption{Outer automorphism groups ($\ell$ odd); we only include the Lie types of \Cref{cor_Symplectic}.}\label{table: outer automorphism groups}
\end{table}

If $e>1$, \Cref{cor_NoPSL2q} says that $\op{soc} (\mathbb{P}A[\ell])$ is tensor-decomposable and hence we may now eliminate this case using the tools from \S\ref{sect_TensorProductsI}. 

\begin{proposition}\label{prop_e>1}
Let $\ell$ and $v$ be as in \Cref{assump_degreePolarization}. With \Cref{notation: Lie}, assume that $q\neq \ell$ (so $e \neq 1$).
If $\ell > (2q_v^{4e(r+1)})^{[F(v):\mathbb{Q}]}$, then $G_\ell$ is not contained in $H$. Moreover, $e(r+1) \leq \log_2(2g)\sqrt[3]{2g}$.
\end{proposition}

\begin{proof}
Let $N$ be the exponent of $\operatorname{Out}(S)$. Since $\mathbb{P}H$ is almost simple, we apply \Cref{lemma_socGIsPerfect} to see that $h^N \in S$ for all $h \in \mathbb{P}H$. Moreover, we know that $S$ is tensor-decomposable by \Cref{cor_NoPSL2q}(2), so we may apply \Cref{prop_NoELargerThan1} with $t=e$ and $J=\pi^{-1}(S)$ where $\pi: H \rightarrow \mathbb{P}H$ is the canonical projection. It then follows that $\ell \leq (2q_v^N)^{[F(v):\mathbb{Q}]}$, where $N \leq 4e(r+1)$ by \Cref{lemma_IndexSocle}. Now \Cref{cor_NoPSL2q}(2) implies that $e$ is odd and hence $e\geq 3$ so  $r \leq \sqrt[3]{2g} -1$ and $3\leq e \leq \log_2(2g)$ hence $e(r+1) \leq \log_2(2g)\sqrt[3]{2g}$.
\end{proof}

Having treated the $e>1$ case, we henceforth restrict to the $e=1$ setting for the remainder of this section.

\subsection{The non-minuscule case}

We first treat the case where $\lambda$ is not a minuscule weight
\Cref{def_weights}. In this situation, our bound will depend on the rank of the associated Lie group $G$ which we first bound.

\begin{proposition}\label{prop: rank bound}
Suppose that the invariant $q$ attached to $G$ is equal to $\ell$ and let $L(\lambda)$ be a symplectic representation of the finite group of Lie type $\Gq$. Then either: $G$ is of type $C_r$ and $L(\lambda)$ is the defining representation of $\operatorname{Sp}_{2r, \overline{\mathbb{F}}_\ell}$; or $r \leq 2\sqrt[3]{\dim L(\lambda)}$.

\end{proposition}

\begin{proof}
For $r > 11$, this follows from \cite[Theorem 5.1]{MR1901354}: \cite[Table 2]{MR1901354} lists no symplectic representation for types $A_r, B_r$ and $D_r$, and only $\omega_r$ for type $C_r$. It follows that (under the assumption $r>11$ and with the exception of the standard representation of $C_r$) we have $\dim L(\lambda) \geq r^3/8$. For $r \leq 11$, the result follows from a tedious but straightforward examination of the tables in Appendix A.6 to Appendix A.53 of \cite{MR1901354}. To simplify the reading of these tables, notice that (when consulting the table relative to a certain type of rank $r$) we simply have to check that all the representations listed with dimension less than $r^3/8$ do not appear in \Cref{cor_Symplectic}; aside from the standard representation for $C_r$, we find that there are no symplectic ones below this bound.
\end{proof}

\begin{lemma}\label{lemma_eqs}
Let $\ell \geq 5$ be a prime number, $\lambda \neq 0$ an $\ell$-restricted weight of $G$ which is not minuscule, and $\rho:G \to \operatorname{GL}_{L(\lambda)}$ the corresponding irreducible representation. For every semisimple $x \in G(\overline{\mathbb{F}}_\ell)$ there are three eigenvalues $\lambda_1,\lambda_2,\lambda_3$ of $\rho(x)$ that satisfy $\lambda_2^2=\lambda_1\lambda_3$.
\end{lemma}

\begin{proof}
By assumption $\lambda$ is not minuscule, hence there exists a positive root $\alpha$ such that $(\lambda, \alpha^\vee) \geq 2$. By \Cref{thm_Premet}(2), the three weights $w_1=\lambda, w_2=\lambda-\alpha, w_3=\lambda-2\alpha$ all appear in $L(\lambda)$ and note that $2w_2=w_1+w_3$. As $x$ is semisimple, it is conjugate to an element $x'$ in the maximal torus $T$. By \Cref{rmk: eigenvalues of semisimple elements}, the eigenvalues of $\rho(x)$ are the values of the weights of $L(\lambda)$ at $x'$; in particular it possesses the three eigenvalues $w_1(x'), w_2(x'), w_3(x')$, which satisfy $w_2(x')^2=w_1(x')w_3(x')$.
\end{proof}

\begin{proposition}\label{prop_ConclusionLie}
Let $\ell$ and $v$ be as in \Cref{assump_degreePolarization}. With \Cref{notation: Lie}, assume that $e=1$ and $\lambda$ is not a minuscule weight.
If $\ell > (2q_v^{4(r+1)})^{[F(v):\mathbb{Q}]}$, then $G_\ell$ is not contained in $H$.
\end{proposition}

\begin{proof}
Suppose by contradiction that $G_\ell$ is contained in $H$. By \Cref{cor_NoPSL2q}, since $e=1$ the weight $\lambda$ is $\ell$-restricted.
The following commutative diagram summarises the situation:
\begin{center}\makebox{
\xymatrix{
\operatorname{Gal}\left(\overline{K}/K \right) \ar@/_/[dr]_{\pi \, \circ \rho_\ell} \ar[r]_{\rho_\ell} & \operatorname{Aut} A[\ell] \ar[d]^\pi
&  \ar@/^/[dl]^{\pi \circ \rho} \Gq \ar[l]^{\hspace{10pt} \rho} \\
& \mathbb{P} \operatorname{Aut} A[\ell] }}
\end{center}
where $\operatorname{soc} \, (\mathbb{P}H) =  \operatorname{Im} (\pi \circ \rho)$.
Since by assumption $G_\ell \subseteq H$ and $e=1$, it follows from \Cref{lemma_IndexSocle} that for every $y\in \operatorname{Gal}\left(\overline{K}/K \right)$ there exist $\mu \in \mathbb{F}_\ell^\times$ and $x \in \Gq \subseteq G\left(\overline{\mathbb{F}}_\ell\right)$ such that
\[
\rho_\ell(y)^\MaxOrderOut = \mu \rho(x),
\]
and therefore the eigenvalues of $\rho_\ell(y)^\MaxOrderOut$ are given by $\{\mu \lambda_1, \ldots, \mu \lambda_{2g} \}$, where $\lambda_1,\ldots,\lambda_{2g}$ are the eigenvalues of $\rho(x)$. If $\rho(x)$ is semisimple, since $\lambda$ is not minuscule,
\Cref{lemma_eqs} shows that (up to renumbering the $\lambda_i$) we have $\lambda_2^2=\lambda_1\lambda_3$, and therefore $(\mu \lambda_2)^{\TwiceMaxOrderOut}=(\mu \lambda_1)^{\MaxOrderOut}(\mu \lambda_3)^{\MaxOrderOut}$.
We have thus proved that, if $y \in \operatorname{Gal}\left(\overline{K}/K \right)$ is such that $\rho_\ell(y)$ is semisimple,
then it has three eigenvalues $\mu_1,\mu_2, \mu_3 \in \overline{\mathbb{F}}_\ell^\times$ that satisfy
\begin{equation*}\label{eq_Power8}
\mu_2^{\TwiceMaxOrderOut}=\mu_1^\MaxOrderOut\cdot \mu_3^\MaxOrderOut \text{ in } \overline{\mathbb{F}}_\ell.
\end{equation*}
For $y=\operatorname{Fr}_v$, $\rho_{\ell}(y)$ is semisimple by \Cref{lemma: semisimple Frob}, so \Cref{prop_MultRelBoundsEll}(1) concludes the proof.

\end{proof}

\subsection{The minuscule case, $e=1$}

We now consider maximal subgroups of class $\mathcal{S}$ acting through representations defined by minuscule weights. In the rest of this section, we continue using \Cref{notation: Lie} and assume that $e=1$ and $\lambda$ is a minuscule weight. 
We begin by describing a sufficient criterion that
ensures that $G_\ell$ is \textit{not} contained in a group $H$ as in \Cref{notation: Lie}, under these assumptions. 
Given the very limited number of minuscule weights, it will then be easy to check that this criterion applies to every minuscule weight giving rise to a symplectic representation.

\begin{lemma}\label{lemma_LieCriterion}
Let $\ell$ and $v$ be as in \Cref{assump_degreePolarization}. Using \Cref{notation: Lie}, assume that $G_\ell$ is contained in $H$ and $q=\ell$.
Let $\lambda_1,\ldots,\lambda_{2g}$ be the weights of the representation $L(\lambda)$. Suppose we can find an odd integer $n$ such that (up to renumbering) the weights $\lambda_i$ satisfy
\[
\lambda_1+\ldots+\lambda_n = \lambda_{n+1}+\ldots+\lambda_{2n}.
\]
Then the inequality $\ell \leq (2q_v^{nN/2})^{[F(v):\mathbb{Q}]}$ holds, where $N$ is the exponent of $\operatorname{Out}(S)$.
\end{lemma}
\begin{proof}

Let $s \in \operatorname{GSp}_{2g}(\mathbb{F}_\ell)$ be semisimple. Suppose that the projective image $[s]$ of $s$ lands in $S$. As in the proof of \Cref{lemma_eqs}, the operator $s$ admits $2n$ eigenvalues $\overline{\mu}_1,\ldots,\overline{\mu}_{2n}$ that satisfy $\overline{\mu}_1 \cdots \overline{\mu}_n = \overline{\mu}_{n+1} \cdots \overline{\mu}_{2n}$. By definition of $N$, the projective image $[h]$ of $h \in G_\ell$ satisfies $[h]^N \in S$, so $h^N$ has $2n$ eigenvalues that satisfy the previous equation. Since the eigenvalues of $s$ are the $N$-th powers of the eigenvalues of $h$, we deduce that every semisimple operator $h \in G_\ell$ has $2n$ eigenvalues $\overline{\mu}_1,\ldots,\overline{\mu}_{2n}$ that satisfy
\begin{equation}\label{eq_muToTheN}
\overline{\mu}_1^N \cdots \overline{\mu}_n^N =\overline{\mu}_{n+1}^N \cdots \overline{\mu}_{2n}^N \text{ in } \overline{\mathbb{F}}_\ell.
\end{equation}
Specialise now this discussion to $h=\rho_\ell(\operatorname{Fr}_v)$, which is semisimple by \Cref{lemma: semisimple Frob}. For this choice of $h$ there are $2n$ eigenvalues of $\operatorname{Fr}_v$, call them $\mu_1,\ldots,\mu_{2n}$, whose reductions in $\overline{\mathbb{F}}_\ell$ satisfy Equation \eqref{eq_muToTheN}. By \Cref{prop_MultRelBoundsEll}(3) we obtain $\ell \leq (2q_v^{nN/2})^{[F(v):\mathbb{Q}]}$.
\end{proof}

We will show the following.
\begin{proposition}\label{prop_NoMinusc}
Let $\ell$ and $v$ be as in \Cref{assump_degreePolarization}. Using \Cref{notation: Lie}, assume that $q=\ell$ and $\lambda$ is a minuscule weight. If $\ell > (2q_v^{6(r+1)})^{[F(v):\mathbb{Q}]}$, the group $G_\ell$ is not contained in $H$. Moreover $r$ satisfies $6(r+1) \leq 4(\log_2(2g)\sqrt[3]{2g}+1)$.

\end{proposition}
\begin{proof}
The list of symplectic minuscule weights is given below in \Cref{prop: symplectic minuscule weights}. In the next subsections, for each weight $\lambda$ described in \Cref{prop: symplectic minuscule weights} we give an upper bound on $r$ using that $\dim L(\lambda)=\dim A[\ell]=2g$ and list six distinct weights $\lambda_1, \ldots, \lambda_6$ of $L(\lambda)$ with $\lambda_1+\lambda_2+\lambda_3=\lambda_4+\lambda_5+\lambda_6$.  The bound on $\ell$ follows from \Cref{lemma_LieCriterion}, where we take $N=4(r+1)$ by \Cref{lemma_IndexSocle} in \Cref{lemma_LieCriterion}.

It remains to prove the bound on $r$. Let $B(g)=4(\log_2(2g)\sqrt[3]{2g}+1)$. Note that all bounds on $r$ given are at most $2\log_2(g)+1$ except in the $B_r$ case; however we can assume that this case only occurs with $r \geq 5$ (see below), at which point our bound also holds. Observe that the inequality $12(\log_2(g)+1) \leq B(g)$ holds for $g \geq 12$. For smaller $g$, the only case that occurs where this fails is $A_5$ where one checks the rank bound directly.
\end{proof}

A list of all minuscule weights is given in \cite[p.~132]{Bourbaki79}, while Tables 1 and 2 of \textit{op.~cit.}~show which minuscule modules are symplectic and give their dimensions. 
\begin{proposition}\label{prop: symplectic minuscule weights}
The complete list of symplectic, minuscule representations is as follows:

\begin{center}
    \begin{tabular}{c|c|c|c}
     Type & Condition & Weight & Dimension \\ \hline
     $A_r$ & $r \equiv 1 \pmod 4$ & $\varpi_{(r+1)/2}$ & $\binom{r+1}{\frac{r+1}{2}}$ \\
     $B_r$ & $r \equiv 1,2 \pmod 4, r \geq 2$ & $\varpi_r$ & $2^r$ \\
     $C_r$ & $r \geq 2$ & $\varpi_1$ & $2r$ \\
     $D_r$ & $r \equiv 2 \pmod 4, r \geq 3$ & $\varpi_r, \varpi_{r-1}$ & $2^{r-1}$ \\
     $E_7$ & & $\varpi_7$ & $56$
\end{tabular}
\end{center}

\end{proposition}
\begin{remark}
    Note that the numbering of the fundamental weights in \cite{Bourbaki79} is different from that in \cite{MR1901354}, which we used for \Cref{cor_Symplectic}.
\end{remark}

Using \Cref{notation: Lie}, we realise the root system of $G$ in the Euclidean vector space $\mathbb{R}^n$ as described in \cite[Planches I-IX]{MR1890629}. We denote by $\varepsilon_1,\ldots,\varepsilon_n$ the canonical basis of $\mathbb{R}^n$ and express the simple roots of $G$ and its fundamental weights in terms of $\varepsilon_1,\ldots,\varepsilon_n$. If $\lambda$ is a dominant weight that is also minuscule, the set of weights appearing in $L(\lambda)$ is the orbit of $\lambda$ under the Weyl group $W(G)$, see \Cref{thm_Premet}(1) and recall that $\lambda$ is minimal for the dominance order (this fact implies that $L(\lambda)$ has the same dimension in positive characteristic as it does in characteristic $0$). 
A description of the action of $W(G)$ on $\varepsilon_1,\ldots,\varepsilon_n$ can be read off \cite[Planches I-IX]{MR1890629}. We will make use of this information without further reference to \cite{MR1890629}.

\subsubsection{Lie type $A_r$, $r \equiv 1 \pmod 4$}
The only weight to consider is $
\varpi_{(r+1)/2} = \frac{1}{2}\sum_{i=1}^{(r+1)/2} \varepsilon_i - \frac{1}{2} \sum_{i=(r+3)/2}^{r+1} \varepsilon_i.$

When $r=1$, the module $L(\varpi_{(r+1)/2})$ is the standard representation of $\operatorname{SL}_2$, of dimension 2, which can only happen for $g=1$, a case we exclude. We can then assume $r \geq 5$.
The Weyl group of $A_r$ is $\op{Sym}_{r+1}$, acting naturally on the $\varepsilon_i$ by permutation. The weights appearing in $L(\varpi_{(r+1)/2})$ are then those of the form $\sum_{i=1}^{r+1} a_i \varepsilon_i$, where $a_i=\pm \tfrac{1}{2}$ with precisely half of them positive. We set $ \omega=\sum_{i=7}^{r+1} (-1)^i\varepsilon_i$ (where $\omega=0$ if $r=5$) and define $\lambda_1, \ldots, \lambda_6$ by
\[
\begin{aligned}
2\lambda_1 & = -\varepsilon_1 + \varepsilon_2 + \varepsilon_3 + \varepsilon_4 - \varepsilon_5 -\varepsilon_6 + \omega, & \quad 2\lambda_4 & = +\varepsilon_1 + \varepsilon_2 - \varepsilon_3 + \varepsilon_4 - \varepsilon_5 - \varepsilon_6 + \omega, \\
2\lambda_2 &  = + \varepsilon_1 - \varepsilon_2 + \varepsilon_3 - \varepsilon_4  + \varepsilon_5 -\varepsilon_6 + \omega, & 2\lambda_5 &  = - \varepsilon_1 + \varepsilon_2 + \varepsilon_3 - \varepsilon_4  + \varepsilon_5 - \varepsilon_6 + \omega, \\
2\lambda_3 &  = + \varepsilon_1 + \varepsilon_2 - \varepsilon_3 - \varepsilon_4  - \varepsilon_5 + \varepsilon_6 + \omega, &2\lambda_6 &  = + \varepsilon_1 - \varepsilon_2 + \varepsilon_3 - \varepsilon_4  -\varepsilon_5 + \varepsilon_6 + \omega.\end{aligned}
\]
From $2g=\dim A[\ell]=\dim L(\lambda)=\binom{r+1}{(r+1)/2} \geq 2^{(r+1)/2}$ we obtain $r \leq  2\log_2(g)+1$.

\subsubsection{Lie type $B_r$, $r \equiv 1,2 \pmod 4, r \geq 2$}
The only weight to consider is $\varpi_r=\frac{1}{2} \sum_{i=1}^r \varepsilon_i$. For $r=2$ this is the standard representation of a group of type $B_2=C_2$, which does not lead to a subgroup of class $\mathcal{S}$. Assume therefore $r \geq 5$ (it corresponds to the defining representation).

The Weyl group is isomorphic to $(\mathbb{Z}/2\mathbb{Z})^{r} \rtimes \op{Sym}_r$, with the factor $\op{Sym}_r$ acting on the $\varepsilon_i$ by permutation, while the nontrivial element in the $i$-th factor $\mathbb{Z}/2\mathbb{Z}$ acts on $\varepsilon_i$ by sending it to $-\varepsilon_i$.
We set $\omega=\sum_{i=5}^r \varepsilon_i$ and define
\[
\begin{aligned}
2\lambda_1 &=+\varepsilon_1 + \varepsilon_2 + \varepsilon_3 + \varepsilon_4 + \omega, \qquad & 2\lambda_4 &=-\varepsilon_1 + \varepsilon_2 + \varepsilon_3 + \varepsilon_4 + \omega, \\
2\lambda_2 &=-\varepsilon_1 - \varepsilon_2 + \varepsilon_3 + \varepsilon_4 + \omega, & 2\lambda_5 &=+\varepsilon_1 - \varepsilon_2 - \varepsilon_3 + \varepsilon_4 + \omega, \\
2\lambda_3 &=+\varepsilon_1 + \varepsilon_2 - \varepsilon_3 - \varepsilon_4 + \omega, & 2\lambda_6 &=+\varepsilon_1 + \varepsilon_2 + \varepsilon_3 - \varepsilon_4 + \omega.
\end{aligned}
\]

From $2g=\dim A[\ell]=\dim L(\lambda)=2^r$ we obtain $r = \log_2(g)+1$.

\subsubsection{Lie type $C_r$, $r \geq 2$}
The only minuscule, symplectic module for a group of type $C_r$ is the defining representation $L(\varpi_1)$, of dimension $2r$ (so $r=g$). This representation does not give rise to a maximal class $\mathcal{S}$ subgroup of $\operatorname{GSp}_{2g}(\mathbb{F}_\ell)$.

\subsubsection{Lie type $D_r$, $r \equiv 2 \pmod 4$, $r \geq 3$}
The weights to consider are $\varpi_{r-1}=\frac{1}{2}\sum_{i=1}^{r-1} \varepsilon_{i} - \frac{1}{2}\varepsilon_{r}$ and $\varpi_{r}=\frac{1}{2}\sum_{i=1}^{r} \varepsilon_{i}$. The Weyl group is isomorphic to $(\mathbb{Z}/2\mathbb{Z})^{r-1} \rtimes \op{Sym}_r$, where the factor $\op{Sym}_r$ acts by permutation on the $\varepsilon_i$ while the factor $(\mathbb{Z}/2\mathbb{Z})^{r-1} \cong \left\{ e_1,\ldots,e_r \in \{\pm 1\} \bigm\vert \prod_{i=1}^r e_r=1 \right\}$ acts by the formula $(e_1,\ldots,e_r) \cdot \sum_{i=1}^r a_i\varepsilon_i=\sum_{i=1}^r e_ia_i\varepsilon_i$. 
For the weight $\varpi_r$ we set $\omega=\sum_{i=7}^{r}\varepsilon_i$ (or $0$ if $r=6$) and take 
\[
\begin{aligned}
2\lambda_1 &= +\varepsilon_1 +\varepsilon_2 +\varepsilon_3 +\varepsilon_4 +\varepsilon_5 +\varepsilon_6 + \omega, \quad & 2\lambda_4 &=  -\varepsilon_1 -\varepsilon_2 +\varepsilon_3 +\varepsilon_4 +\varepsilon_5 +\varepsilon_6 + \omega, \\
2\lambda_2 &= -\varepsilon_1 -\varepsilon_2 -\varepsilon_3 +\varepsilon_4 -\varepsilon_5 +\varepsilon_6 + \omega, & 2\lambda_5 &=  +\varepsilon_1 +\varepsilon_2 -\varepsilon_3 -\varepsilon_4 +\varepsilon_5 +\varepsilon_6 + \omega, \\
2\lambda_3 &= +\varepsilon_1 +\varepsilon_2 +\varepsilon_3 -\varepsilon_4 +\varepsilon_5 -\varepsilon_6 + \omega, & 2\lambda_6 &=  +\varepsilon_1 +\varepsilon_2 +\varepsilon_3 +\varepsilon_4 -\varepsilon_5 -\varepsilon_6 + \omega.
\end{aligned}
\]

For $\varpi_{r-1}$ it suffices to change the sign of $\varepsilon_1$ in the weights just listed. As $2g=\dim A[\ell]=\dim L(\lambda) = 2^{r-1}$ we obtain $r = \log_2(g)+2$.

\subsubsection{Lie type $E_7$}

The only minuscule weight giving rise to a symplectic representation is $\varpi_7=\varepsilon_6 + \frac{1}{2}\left( \varepsilon_8-\varepsilon_7 \right)$. We take
\[
\begin{aligned}
2\lambda_1 &= +\varepsilon_1 +\varepsilon_2 +\varepsilon_3 +\varepsilon_4 +\varepsilon_5 +\varepsilon_6, \qquad & 2\lambda_4 &= -\varepsilon_1 -\varepsilon_2 +\varepsilon_3 +\varepsilon_4 +\varepsilon_5 +\varepsilon_6,\\
2\lambda_2 &= -\varepsilon_1 -\varepsilon_2 -\varepsilon_3 +\varepsilon_4 -\varepsilon_5 +\varepsilon_6, & 2\lambda_5 &= +\varepsilon_1 +\varepsilon_2 -\varepsilon_3 -\varepsilon_4 +\varepsilon_5 +\varepsilon_6, \\
2\lambda_3 &= +\varepsilon_1 +\varepsilon_2 +\varepsilon_3 -\varepsilon_4 +\varepsilon_5 -\varepsilon_6, & 2\lambda_6 &= +\varepsilon_1 +\varepsilon_2 +\varepsilon_3 +\varepsilon_4 -\varepsilon_5 -\varepsilon_6.
\end{aligned}
\]
The representation is 56-dimensional, so $g=28$. The rank is $r=7$.

\bigskip

We have thus exhausted the list of all the minuscule weights that give rise to symplectic representations, and the proof of \Cref{prop_NoMinusc} is complete.

\section{Proof of Main Theorem}\label{sect_MainProof}

In this section, we shall complete the proof of \Cref{thm_Main} by amalgamating our previous case analysis for each class of maximal subgroups, excluding each case by virtue of our lower bound on $\ell$. To simplify this, we first prove some inequalities for the bounds on $\ell$ that we will use, separating the proposition into two parts according to the type of bound we use.

Recall that $b(A/K;d)=\left( (7g)^{8g^2} d[K:\mathbb{Q}] \max\left(h(A), \log d[K:\mathbb{Q}],1 \right) \right)^{2g^2}$ is our explicit isogeny bound; 
one can see directly that it is an increasing function in $d$.

\begin{proposition}\label{prop_bounds1} 
Let $g \geq 2$ and define $B(g)=4(\log_2(2g)\sqrt[3]{2g} + 1)$. Recall that $\xi(t) = \max_{\sigma \in \op{Sym}_t} \operatorname{order}(\sigma)$. The following inequalities hold:
\begin{enumerate}

\item $b(A/K;(2g+5)!)^{1/(2g-1)} \leq b(A/K;2g)$; 

\item $4(2\sqrt[3]{2g} +1) \leq B(g)$;
\item $\xi(\lfloor \log_2(2g) \rfloor) \leq B(g)$.
\end{enumerate}
\end{proposition}

\begin{proof}
    These are all elementary; for (3), we use \cite[Theorem 2]{MR979940}, which gives $\xi(t) \leq \exp(1.2\sqrt{t\log t})$ for all $t \geq 3$.
\end{proof}

We now prove the main theorem of this paper, of which \Cref{thm_Main} is an immediate corollary by the discussion after \Cref{rmk: genus 2}. Throughout the proof, we will freely use \Cref{prop_bounds1} for contradictions without further reference.

\begin{theorem}[=\Cref{thm_Main}]\label{thm_MainProof}
Let $A/K$ be an abelian variety of dimension $g \geq 2$ and $G_{\ell^\infty}$ be the image of the natural representation $\rho_{\ell^\infty} : \abGal{K} \to \operatorname{Aut} T_\ell A$. Suppose that:
\begin{enumerate}
\item $\operatorname{End}_{\overline{K}}(A)=\mathbb{Z}$;
\item there exists a place $v$ of $K$, of good reduction for $A$ and with residue field of order $q_v$, such that the roots of characteristic polynomial of the Frobenius at $v$ generate a free group of rank $g+1$.

\end{enumerate}
Then the equality $G_{\ell^\infty}=\operatorname{GSp}_{2g}(\mathbb{Z}_\ell)$ holds for every prime $\ell$ unramified in $K$ and such that
\[
\ell > \max\left\{ \left(2q_v^{4(\log_2(2g)\sqrt[3]{2g} + 1)}\right)^{[F(v):\mathbb{Q}]}, \; b(A/K; 2g) \right\}.
\]
\end{theorem}

\begin{proof}
Let $\ell$ be such a prime and recall that by \Cref{cor_Conclusion}, it suffices to show that $\operatorname{Sp}_{2g}(\mathbb{F}_\ell) \subset G_\ell$; this we do by proving that $G_\ell$ is not contained in any other maximal subgroup $G$ as described in Aschbacher's result (\Cref{thm_Aschbacher}), contradicting the bound on $\ell$. If $G$ is of class $\mathcal{C}_1, \mathcal{C}_2$, $\mathcal{C}_3$, $\mathcal{C}_4$ or $\mathcal{C}_7$, then the bound follows from the following table.

\begin{center}
\begin{tabular}{c|l|l}
  \textbf{Class} & \textbf{Upper bound on $\ell$} & \textbf{Reference} \\ \hline
  $\mathcal{C}_1, \mathcal{C}_2, \mathcal{C}_3$ & $b(A/K;2g)$ & \Cref{prop_EasyCases} \\
  $\mathcal{C}_4$ & $(2q_v)^{[F(v):\mathbb{Q}]}$ & \Cref{prop:tensor products} \\
  $\mathcal{C}_7$ & $\left(2q_v^{\xi(\lfloor \log_2(2g) \rfloor}\right)^{[F(v):\mathbb{Q}]}$ & \Cref{prop: NoInducedTensors rewrite} \\
\end{tabular}
\end{center}

If $G$ is of class $\mathcal{C}_6$, then using \Cref{rmk_OrdBizarreGroup}, we are able to bound $\#\mathbb{P}G$ (and hence $\#\mathbb{P}G_\ell$) by $(2g+5)!$. However, \Cref{prop_UnconditionalConstantGroups} implies that this is only possible if $\ell \leq b(A/K;(2g+5)!)^{1/(2g-1)} \leq b(A/K;2g)$.

Lastly, we consider the case where $G$ is in the exceptional class $\mathcal{S}$. We may assume that $G$ is primitive. Since $\ell> b(A/K,J(2g))^{1/(2g-1)}$ by \Cref{prop_bounds1}(1) (where we note $J(2g) \leq (2g+5)!$ by \Cref{thm: Collins}), \Cref{prop_UnconditionalConstantGroups} tells us that $\mathbb{P}G$ is almost simple, with $S=\operatorname{soc}(\mathbb{P}G)$ being of Lie type in characteristic $\ell$. As in \Cref{sect_LieGroups}, we hence write $S=\Gq/Z$ and consider $A[\ell]$ as a representation of $\Gq$ corresponding to the representation $\mathbb{P}A[\ell]$ of $S$, and let $q=\ell^e$ be the corresponding invariant and $r$ be the rank of $S$. 

If $q \neq \ell$ (i.e.~$e\neq 1$), then we have $\ell \leq (2q_v^{4\log_2(2g)\sqrt[3]{2g}})^{[F(v):\mathbb{Q}]}$ by \Cref{prop_e>1}, so we suppose that $e=1$. Write $A[\ell] \otimes \overline{\mathbb{F}}_\ell = L(\lambda)$ by \Cref{cor_NoPSL2q}(1). If $\lambda$ is not minuscule, then $\ell \leq \left( 2q_v^{4(2\sqrt[3]{2g}+1)} \right)^{[F(v):\mathbb{Q}]}$ by \Cref{prop_ConclusionLie} and \Cref{prop: rank bound}, where we note that the defining representation of $\operatorname{Sp}_{2r,\overline{\mathbb{F}}_\ell}$ is in the minuscule case. If $\lambda$ is minuscule, then we instead apply \Cref{prop_NoMinusc} to get $\ell \leq \left( 2q_v^{4(\log_2(2g)\sqrt[3]{2g}+1)}\right)^{[F(v):\mathbb{Q}]}.$
\end{proof}

\section{Computational bounds on non-surjective primes}\label{sect: CompBounds}
We now take a more algorithmic approach in order to sharpen the bound of \Cref{thm_Main} for a given abelian variety $A$. We suppose that the image of $\rho_{A,\ell^\infty}$ is $\operatorname{GSp}_{2g}(\mathbb{Z}_\ell)$ for all $\ell$ large enough (equivalently, for some $\ell$), which in particular implies that $\operatorname{End}_{\overline{K}}(A)=\mathbb{Z}$.

\begin{remark}
\begin{enumerate}
\item We will assume that $A$ is principally polarised for simplicity; however all arguments used are valid for any prime not dividing the degree of a fixed polarisation.
\item In principle, we do not need to know that $\rho_{A,\ell^\infty}$ is surjective for sufficiently large $\ell$ to run this algorithm and the output could potentially provide a certificate of this. However, one should be wary that if it is not true then issues will arise: for example, there could be a positive density of Frobenius elements with trace 0 which would affect the utility of \Cref{lem: general phi}.
\end{enumerate}
\end{remark}

\begin{notation}
We use the following notation in this section.

\begin{tabular}{cl}
$A/K$ & a principally polarised abelian variety of dimension $g$; \\
$\mathfrak{N}_A$ & conductor of $A/K$; \\
$\ell$ & a fixed rational prime; \\
$S_\ell$ & $= \{ v \text{ place of }K : v \mid \ell\infty  \text{ or $A$ does \emph{not} have semistable reduction at } v\}$; \\
$\mathfrak{l}$ & a place of $K$ above $\ell$ of ramification degree $e(\mathfrak{l} \mid \ell)$; \\
$I_{\mathfrak{l}}$ & the inertia group at $\mathfrak{l}$; \\
$\sigma \mid_{I_{\mathfrak{l}}}$ & the restriction of a representation $\sigma$ to $I_{\mathfrak{l}}$; \\
$f_v$ & characteristic polynomial of $\rho_{\ell^\infty}(\op{Fr}_v)$ for $v \nmid \ell$.
\end{tabular}
\end{notation}

Fix a maximal subgroup $M$ of $\operatorname{GSp}_{2g}(\mathbb{F}_\ell)$ with $\det M = \mathbb{F}_\ell^\times$, and suppose that $G_\ell \subseteq M$. In the next subsections we show how to bound the prime $\ell$, depending on the type of $M$, in terms of certain well-chosen characteristic polynomials of Frobenius. In most cases, we will obtain what we call a \textit{divisibility bound}, namely, we will show that the containment $G_\ell \subseteq M$ implies $\ell \mid D_M$ for a computable, nonzero integer $D_M$ depending on $A$ and the type of $M$. For a few maximal subgroups $M$ (namely, the constant groups in class $\mathcal{S}$) we will only obtain an inequality $\ell \leq D_M$ for a suitable integer $D_M$, and we will explain how to quickly test all primes $\ell$ up to the bound $D_M$. The advantage of the approach of this section over \Cref{thm_Main} is that we get much more manageable bounds, although with a less clear dependence on invariants of $A$. 
The arguments in this section apply to abelian varieties of arbitrary dimension over arbitrary number fields, with the exception of those concerning the irreducibility of the representation $A[\ell] \otimes \overline{\mathbb{F}}_\ell$, for which we will specialise to $K=\mathbb{Q}$ and $\dim A =3$. For simplicity of exposition, however, we will often take abelian threefolds as our model case.

\subsection{Preliminaries}

For several of the Aschbacher classes, it is useful to understand the inertia action on $A[\ell]$ at every prime. The following two theorems provide this description under a semistability assumption, which is true for all but finitely many primes $v$.

\begin{proposition}[{\cite[Expos\'{e} IX, Proposition 3.5]{MR354656}}]
\label{prop:Grothendieck semistability}
Let $K_v$ be a finite extension of $\mathbb{Q}_p$. If $A/K_v$ is semistable abelian variety, then the inertia group of $\operatorname{Gal}(\overline{K}_v/K_v)$ acts unipotently on the $\ell$-adic Tate module for $A$ for some (equivalently, every) prime $\ell \neq p$. In particular, the image of inertia acting on $T_\ell A$ is a pro-$\ell$ group for all $\ell \neq p$.
\end{proposition}

The second result we use describes the inertia action at primes above $\ell$. Recall that a fundamental character of level $n$ is a certain surjective homomorphism $\psi_n: I_{\mathfrak{l}} \rightarrow \mathbb{F}_{\ell^n}^{\times}$ and all such characters of a given level are of the form $\psi_n^{\ell^m}$ for some integer $0 \leq m \leq n$. We choose a norm compatible system of fundamental characters, i.e.~such that $\psi_m = \op{N}_{\mathbb{F}_{\ell^n}/\mathbb{F}_{\ell^m}} \circ \, \psi_n = \psi_n^{\frac{\ell^n-1}{\ell^m-1}}$ whenever $m \mid n$. If $K=\mathbb{Q}$ (more generally, if $K_\mathfrak{l} \cong \mathbb{Q}_\ell$), then $\psi_1=\chi_\ell$ is the mod $\ell$ cyclotomic character.

\begin{theorem}[{\cite[Corollary 3.4.4]{MR0419467}}]\label{thm: JH fundamental chars}
Suppose $A/K$ has semistable reduction at $\mathfrak{l}\mid \ell$. Let $U$ be a simple Jordan--H\"{o}lder of the the $I_{\mathfrak{l}}$-module $A[\ell]$ with $\dim_{\mathbb{F}_\ell}U=n$. Then wild inertia acts trivially on $U$ and every eigenvalue of $U$ is of the form $\psi_n^k$ for some  $0 \leq k \leq e(\mathfrak{l} \mid \ell) \frac{\ell^n-1}{\ell-1}$.
\end{theorem}

\begin{remark}
Raynaud's theorem is usually stated for places of \textit{good} reduction. However, as it was shown in \cite[Lemma 4.9]{MR3211798}, the extension to the semistable case follows easily upon applying results of Grothendieck \cite[Exposé 9]{MR354656}.
\end{remark}

For examination of the classes $\mathcal{C}_2$ and $\mathcal{C}_3$ we will compose $\rho_\ell$ with another homomorphism. We state some simple properties first.

\begin{lemma}\label{lem: semistable phi}
Fix a subgroup $M$ of $\operatorname{GL}_{2g}(\mathbb{F}_\ell)$ and a group homomorphism $\phi : M \to H$. Suppose that $G_\ell$ is contained in $M$ and let $\varphi :  \op{Gal}(\overline{K}/K) \xrightarrow{\rho_\ell} G_\ell \subseteq M \xrightarrow{\phi} H$.

\begin{enumerate}
\item If $\ell \nmid \#H$, then the character $\varphi$ is unramified outside $S_\ell$ and $\varphi$ is (at worst) tamely ramified at all primes $\mathfrak{l} \mid \ell$.
\item Suppose $A/K$ is semistable, $\ell \nmid \#H$ and $\varphi$ is unramifed at all places above $\ell$. Then $\varphi$ is unramified at all finite places. In particular, if $K$ has narrow class number one, then $\varphi$ is trivial.
\end{enumerate}
\end{lemma}

\begin{proof}
(1): Since $\varphi$ factors through $\rho_\ell$, it is unramified outside $\mathfrak{N}_A\ell\infty$. Let $v \nmid \ell$ be a place of semistable reduction. Since the image of inertia is an $\ell$-group by \Cref{prop:Grothendieck semistability} and $\ell \nmid \#H$, $\varphi$ is unramified at $v$; similarly the image of wild inertia is trivial at places $\mathfrak{l} \mid \ell$.
(2) follows immediately from (1).
\end{proof}

\begin{remark}
\begin{enumerate}
\item When we come to use \Cref{lem: semistable phi} for classes $\mathcal{C}_2$ and $\mathcal{C}_3$, $\varphi$ being unramified at all places above $\ell$ will follow from semistability there, together with  $\ell$ being larger than an explicit bound which is independent of $\ell$; in particular this holds for all but finitely many primes $\ell$.

\item In the application, there won't be a single group $H$ possible, but a finite family $\mathcal{H}$ of cardinality depending on the factorisation of $2g$, namely $\mathcal{H} = \{ \op{Sym}_r : r \mid 2g \}$ and $\mathcal{H}= \{ \mathbb{Z}/r\mathbb{Z} : r \text{ is prime and }r \mid 2g \}$ for classes $\mathcal{C}_2$ and $\mathcal{C}_3$ respectively.
\end{enumerate}
\end{remark}

\newcommand{\SetOfCharacters}{\Gamma}
\newcommand{\SingleCharacter}{\gamma}

The example we will give will be semistable from which we will immediately conclude that $\varphi$ is trivial (except for small primes). We briefly outline an approach that weakens this assumption and only requires that $\varphi$ is unramified at all places above $\ell$. Suppose that $\phi : M \to H$ is fixed.
Since $\varphi$ factors through $\rho_\ell$, it has bounded ramification and degree and so, unless $\varphi$ is trivial,
\[
\SetOfCharacters:=\{ \SingleCharacter : \op{Gal}(\overline{K}/K) \to H \, \mid \SingleCharacter \text{ has conductor dividing } \mathfrak{N}_A\infty, \; \gamma \text{ is nontrivial} \}
\]
is a finite set containing $\varphi$ (although determination of $\SetOfCharacters$ may be nontrivial). 

\begin{lemma}\label{lem: general phi}

Suppose that, for each $\SingleCharacter \in \SetOfCharacters$, there exists $h_\SingleCharacter \in \operatorname{Im} \SingleCharacter$ such that $\phi(g)=h_\SingleCharacter$ implies $\operatorname{tr}(g)=0$. For each $\SingleCharacter \in \SetOfCharacters$ choose such an $h_\SingleCharacter$ and a place $v_{\SingleCharacter} \nmid \mathfrak{N}_A$ such that $\SingleCharacter(\op{Fr}_{v_{\SingleCharacter}}) =h_\SingleCharacter$. If $G_\ell \subseteq M$, then one of the following holds:
\begin{enumerate}
    \item $v_{\SingleCharacter} \mid \ell$ for some $\SingleCharacter \in \SetOfCharacters$; or
    \item $\varphi$ is trivial; or
    \item $\ell \mid \prod_{\SingleCharacter \in \SetOfCharacters} \op{tr}(\op{Fr}_{v_{\SingleCharacter}})$.
\end{enumerate}
\end{lemma}

\begin{proof}
Suppose $v_{\SingleCharacter} \nmid \ell$ for all $\SingleCharacter \in \SetOfCharacters$ and $\varphi$ is nontrivial. We know that $\SingleCharacter := \phi \circ \rho_\ell$ is in $\SetOfCharacters$. By definition of $h_\gamma$ and $v_{\SingleCharacter}$ we have $h_{\SingleCharacter} =\SingleCharacter(\op{Fr}_{v_\SingleCharacter}) = \phi(\rho_\ell(\operatorname{Fr}_{v_\SingleCharacter}))$, hence $\operatorname{tr} \rho_\ell(\operatorname{Fr}_{v_\SingleCharacter}) = 0$ and so $\ell \mid \op{tr}(\op{Fr}_{v_{\SingleCharacter}})$.
\end{proof}

We delay the discussion of finding suitable elements $h_\SingleCharacter$ to the relevant subsections. Instead we observe that for a given pair $(\SingleCharacter,h_\SingleCharacter)$ as in \Cref{lem: general phi}, there is a positive density of possible places $v_\SingleCharacter$ exist such that $\SingleCharacter(\op{Fr}_{v_{\SingleCharacter}}) =h_\SingleCharacter$ by Chebotarev's theorem. Moreover, we show momentarily in \Cref{lemma: trace 0 has density 0} that we can further impose that our choice of Frobenius has nonzero trace in order to obtain a nontrivial divisibility restriction on $\ell$.

\begin{lemma}\label{lemma: trace 0 has density 0}
The set $T$ of places $v$ of $K$ such that $\operatorname{tr} \operatorname{Fr}_v =0$ has density $0$.
\end{lemma}

\begin{proof}
   Let $\ell$ be any prime. The density of $T$ is at most the density of places $v$ for which $\operatorname{tr}(\operatorname{Fr}_v) \equiv 0 \pmod{\ell}$, which by Chebotarev's theorem is $\tfrac{\#\{h \in G_\ell \, : \, \operatorname{tr} h = 0\}}{\#G_\ell}$.
   For $\ell \gg_A 0$, by assumption we have $G_\ell=\operatorname{GSp}_{2g}(\mathbb{F}_\ell)$, and it is easy to see that for all primes $\ell \gg 0$ we have
   \[
   \frac{\#\{h \in \operatorname{GSp}_{2g}(\mathbb{F}_\ell) : \operatorname{tr} h = 0\}}{\#\operatorname{GSp}_{2g}(\mathbb{F}_\ell)} \leq \frac{2}{\ell} \xrightarrow{\ell \rightarrow \infty } 0.
   \]
\end{proof}

\subsection{Geometric reducibility and class $\mathcal{C}_1$}\label{subsubsect: geometric reducibility example}
In this section we take $K=\mathbb{Q}$. We assume that $G_\ell$ acts reducibly on $A[\ell] \otimes \overline{\mathbb{F}}_\ell$ and explain how to bound $\ell$. In particular, this covers the case when $G_\ell$ is contained in a maximal subgroup of $\operatorname{GSp}_{2g}(\mathbb{F}_\ell)$ of class $\mathcal{C}_1$.

\begin{lemma}\label{lemma: JHdet}
Let $A/\mathbb{Q}$ be an abelian variety, $\ell$ a prime number, and $W$ a Jordan--H\"{o}lder factor of $A[\ell] \otimes \overline{\mathbb{F}}_\ell$.

\begin{enumerate}
\item The character $\det W$ is unramified outside $S_\ell$
and is (at worst) tamely ramified at $\ell$.

\item If $A/\mathbb{Q}$ has semistable reduction at $\ell$, then $\det W|_{I_\ell}=\chi_\ell^r$ for some $0 \leq r \leq \dim W $, hence $\det W=\varepsilon \cdot \chi_\ell^r$ for some $0 \leq r \leq \dim W$ with $\varepsilon$ ramified at most at the primes of bad unstable reduction of $A$.
In particular, if $A/\mathbb{Q}$ is semistable everywhere, then $\det W=\chi_\ell^r$ for some $0 \leq r \leq \dim W$.
\end{enumerate}
\end{lemma}

\begin{proof}
(1) is a special case of \Cref{lem: semistable phi}(1) since the image of $\det W$ has order prime to $\ell$.

(2): From (1) and class field theory (in this case, the Kronecker--Weber theorem), $\det W|_{I_\ell}$ factors through $\op{Gal}(\mathbb{Q}(\zeta_\ell)/\mathbb{Q})$, whose character group is generated by the cyclotomic character. 
Let $U$ be the simple Jordan--H\"{o}lder factor of the inertia module $A[\ell]$ whose semisimplification over $\overline{\mathbb{F}}_\ell$ contains $W|_{I_\ell}$ and write $n=\dim U$. By \Cref{thm: JH fundamental chars}, the eigenvalues of $U$ (and hence of $W|_{I_\ell}$) are of the form $\psi_n^j$ for some $0 \leq j \leq \frac{\ell^n-1}{\ell-1}$. Therefore $\det W|_{I_\ell} = \psi_n^{n_W}$ where $0 \leq n_W \leq \dim(W) \frac{\ell^n-1}{\ell-1}$. Since $\det W|_{I_\ell}$ is a power of $\chi_\ell=\psi_n^{(\ell^n-1)/(\ell-1)}$ as shown above, it follows $\det W|_{I_\ell} = \chi_\ell^r$ with $0 \leq r \leq \dim(W)$. The rest of (2) follows easily.
\end{proof}

Note that \Cref{lemma: JHdet}(2) applies to any given abelian variety $A/\mathbb{Q}$ for all but finitely many primes $\ell$. With the notation of the lemma, let $k \geq 1$ be such that $\det W$ has image contained in $\mathbb{F}_{\ell^k}^\times$. By identifying $\mathbb{F}_{\ell^k}$ with the residue field of $\mathbb{Z}[\zeta_{\ell^k-1}]$ at a prime of characteristic $\ell$, we consider $\varepsilon$ as the reduction of a character $\varepsilon : \operatorname{Gal}(\overline{\mathbb{Q}}/\mathbb{Q}) \to \mu_{\ell^k-1} \subset \mathbb{Z}[\zeta_{\ell^k-1}]$.
When $p$ is a prime where $\varepsilon$ is unramified, we write $\varepsilon(p)$ to denote $\varepsilon(\operatorname{Fr}_p)$.

\begin{lemma}\label{lem: JHelim}
Suppose that $A$ is semistable at $\ell$. Let $W$ be a Jordan--H\"older factor of $A[\ell] \otimes \overline{\mathbb{F}}_\ell$ and write $\det W = \varepsilon \cdot \chi_\ell^r$ as in \Cref{lemma: JHdet}(2). For a prime $p$ of good reduction of $A$, write $f_p(x)=\prod_{i=1}^{2g}(x-\mu_i)$ in $\overline{\mathbb{Q}}[x]$ and set 
\[
g_p(x) := \prod_{\substack{I \subseteq \{1,\ldots,2g\} \\ \#I=\dim W}}\left(x-\prod_{i \in I} \mu_i\right) \in \mathbb{Z}[x].
\]
Then $\ell \mid p \cdot N_{\mathbb{Q}(\zeta_{\ell^k-1})/\mathbb{Q}}\left(g_p(\varepsilon(p) \cdot p^r)\right)$. Moreover, if $r \neq {\dim W/2}$, then $g_p(\varepsilon(p) \cdot p^r) \neq 0$.
\end{lemma}

\begin{proof}
We can assume $\ell \neq p$. Since $W$ is a subrepresentation of the semisimplification of $A[\ell]$, its eigenvalues for $\op{Fr}_p$ are roots of $f_p(x)$, and the determinant of $\op{Fr}_p$ acting on $W$ is the product of $\dim W$ roots of $f_p(x) \in \overline{\mathbb{F}}_\ell[x]$. In particular, we have $g_p( (\det W)(\operatorname{Fr}_p))=0$ in $\overline{\mathbb{F}}_\ell$. The lemma then follows from the fact that $(\det W)(\op{Fr}_p)$ is the reduction modulo (a prime above) $\ell$ of $\varepsilon(p) \cdot \chi_\ell(\op{Fr}_p)^r = \varepsilon(p) p^r$. Since the roots of $f_p(x)$ are $p$-Weil numbers of weight $1$ and $|\varepsilon(p)|=1$, the quantity $g_p(\varepsilon(p) \cdot p^r)$ can be zero only when $r=\frac{1}{2}\dim W$.
\end{proof}

\begin{remark}\label{rmk: det}
By running over all possible values of $r$ and the finitely many characters $\varepsilon$ with conductor dividing that of $A$ we can produce a finite set of primes $\ell$ that can have a Jordan--H\"{o}lder factor of odd dimension. Furthermore, since $A[\ell] \cong A^\vee[\ell](1)$ we only have to check $0 \leq r \leq \frac{1}{2}\dim W$.
\end{remark}

\begin{remark}
The key ingredient we used in \Cref{lem: JHelim} is that $\det W$ is the product of a character with bounded ramification and a bounded power of the $\bmod{\, \ell}$ cyclotomic character. If one wished to generalise to number fields $K$, then one could impose conditions on the ray class field extension $K(\mathfrak{l}\infty)/K(\infty)$ for all $\mathfrak{l} \mid \ell$ to ensure $\det W\mid_{I_\mathfrak{l}}$ is a bounded power of the level $1$ fundamental character (as in \Cref{lemma: JHdet}(2)) and then control the ramification of $\ell$ to relate this to the cyclotomic character.
\end{remark}

When $\dim A \leq 3$, \Cref{lem: JHelim} and \Cref{rmk: det} allow us to bound the values of $\ell$ for which $A[\ell] \otimes \overline{\mathbb{F}}_\ell$ has a nontrivial Jordan--H\"older factor $W$ unless $\dim W = 2$ and $\det W = \varepsilon \chi_\ell$. We will deal with this case using Serre's modularity conjecture (now a theorem of Khare--Wintenberger). For simplicity we only consider the case $\varepsilon=1$ that we will need for the example in \Cref{sect: Example}; adapting this to general $\varepsilon$ is straightforward (simply apply the considerations below to $W \otimes \varepsilon^{-1}$).

\newcommand{\ModForm}{\tilde{f}}

\begin{theorem}[{\cite[Theorem 1.2]{MR2551763} and \cite[Lemma 3.9]{MR4732686}}]\label{thm: SerreModularity}
Suppose that $A$ has semistable reduction at $\ell$. Let $W$ be a Jordan--H\"older factor of $A[\ell] \otimes \overline{\mathbb{F}}_\ell$ with $\dim W=2$ and $\det W=\chi_\ell$. There exists a weight $2$ newform $\ModForm =q+\sum_{n\geq 2} a_n(\ModForm)q^n \in S_2(\Gamma_0(N))^{\operatorname{new}}$ of level $N \mid \mathfrak{N}_A$ and a prime $\mathfrak{l} \mid \ell$ of the Hecke eigenfield $K_{\ModForm} := \mathbb{Q}(a_n(\ModForm) : n \in \mathbb{N})$ such that
\[
\det(t\cdot \op{Id} - \op{Fr}_p \mid W) \equiv t^2-a_p(\ModForm)t + p \pmod{\mathfrak{l}}
\]
for all primes $p \nmid N_A\ell$.
\end{theorem}

\begin{remark}
If $N=\mathfrak{N}_A$, then the semisimplification of $A[\ell]$ (which is the direct sum of its Jordan--H\"older factors) necessarily splits as $W \oplus W'$ with $W'$ unramified. Hence $W'$ is trivial, a case which we have already dealt with. We may therefore suppose $N \neq \mathfrak{N}_A$.
\end{remark}

For each proper divisor $N$ of $\mathfrak{N}_A$, let $B_N$ be an eigenbasis of newforms in $S_2(\Gamma_0(N))^{\op{new}}$. Define
\[
H_{N, p}(t) := 
\prod_{\ModForm \in B_N} \prod_{\sigma : K_{\ModForm} \hookrightarrow \mathbb{C}} \left( t^2-\sigma(a_p(\ModForm))t+p \right).
\]
This is the characteristic polynomial of the Hecke operator $T_p$ acting on $S_2(\Gamma_0(N))^{\operatorname{new}}$ and can be easily computed for moderate values of $N$. 

\begin{lemma}\label{lemma: eliminating two dimensional JH factors}
Suppose that $\ell$ is a prime of semistable reduction and $A[\ell]\otimes \overline{\mathbb{F}}_\ell$ has a Jordan--H\"older factor $W$ with $\dim W =2, \det W = \chi_\ell$. Then
        \[
        \ell \mid p \prod_{\substack{N \mid \mathfrak{N}_A \\ N \neq \mathfrak{N}_A}}\operatorname{Res}\left( H_{N,p}(t), f_p(t) \right)
        \]
        for all primes $p$ of good reduction.
\end{lemma}
\begin{proof}

Let $\ModForm$ be the newform of \Cref{thm: SerreModularity} corresponding to $W$. The definitions imply that for every prime $p \neq \ell$ of good reduction of $A$, the characteristic polynomial $f_p(t) \in \overline{\mathbb{F}}_\ell[t]$ has the factor $t^2-a_p(\ModForm)t + p$ in common with $H_{N, p}(t)$. 
\end{proof}

\begin{remark}\label{rmk: proving irreducibility}
The previous lemma only applies to semistable primes (this is used to deduce that the newform of \Cref{thm: SerreModularity} has weight 2, see the proof of \cite[Lemma 3.9]{MR4732686}). For each of the finitely many non-semistable primes $\ell$, the irreducibility of $A[\ell]$ can be proved by finding a prime $p$ such that $f_p(x)$ is irreducible in $\mathbb{F}_\ell[x]$; such a prime $p$ always exists if $G_\ell=\operatorname{GSp}_{2g}(\mathbb{F}_\ell)$.
\end{remark}

\begin{remark}
The previous remark explains how to show irreducibility of $A[\ell]$ over $\mathbb{F}_\ell$ for a given prime $\ell$. We now discuss the more general problem of showing geometric irreducibility.
By Schur's lemma, the representation $A[\ell] \otimes_{\mathbb{F}_\ell} \overline{\mathbb{F}}_\ell$ is irreducible if and only if
\[
\operatorname{End}_{\overline{\mathbb{F}}_\ell[G_\ell]}(A[\ell] \otimes \overline{\mathbb{F}}_\ell) \cong \overline{\mathbb{F}}_\ell;
\]
since 
\[
\operatorname{End}_{\overline{\mathbb{F}}_\ell[G_\ell]}(A[\ell] \otimes \overline{\mathbb{F}}_\ell) \cong \operatorname{End}_{\mathbb{F}_\ell[G_\ell]}(A[\ell]) \otimes_{\mathbb{F}_\ell} \overline{\mathbb{F}}_\ell,
\]
we see that geometric irreducibility is equivalent to the fact that the commutant of $G_\ell$ in $\operatorname{End}_{\mathbb{F}_\ell}(A[\ell])$ is $\mathbb{F}_\ell$. Suppose that $A[\ell]$ is irreducible. By Schur's lemma again, the endomorphism ring
$
\operatorname{End}_{\mathbb{F}_\ell[G_\ell]}(A[\ell])
$
is a division algebra.
Since it is also finite, it is a (finite) field by Wedderburn's little theorem. If $A[\ell] \otimes_{\mathbb{F}_\ell} \overline{\mathbb{F}}_\ell$ is reducible, it follows from the above that $\operatorname{End}_{\mathbb{F}_\ell[G_\ell]}(A[\ell]) \cong \mathbb{F}_{\ell^r}$ for some $r \geq 2$. Thus, if $A[\ell]$ is irreducible but geometrically reducible, $G_\ell$ acts $\mathbb{F}_{\ell^r}$-linearly on $A[\ell]$ for some $r \geq 2$, and in particular, the characteristic polynomial of every element in $G_\ell$ is the norm from $\mathbb{F}_{\ell^r}$ to $\mathbb{F}_{\ell}$ of some polynomial with coefficients in $\mathbb{F}_{\ell^r}$. To show geometric irreducibility it is then sufficient to exhibit a characteristic polynomial of Frobenius that is not of this form. For example, any polynomial with precisely one root in $\mathbb{F}_\ell$ has this property: if a polynomial $p(x) \in \mathbb{F}_\ell[x]$ is the norm of a polynomial in $\mathbb{F}_{\ell^r}[x]$ and $\mu \in \overline{\mathbb{F}}_\ell$ is a root of $p(x)$, then the multiset $\{\mu, \mu^\ell, \ldots, \mu^{\ell^{r-1}}\}$ is contained in the multiset of roots of $p(x)$, so $p(x)$ cannot have precisely one root in $\mathbb{F}_\ell$.
Primes $\ell$ for which geometric irreducibility fails may also be bounded using \Cref{prop_EasyCases}. Unfortunately, the bound one gets in this way is too large to be useful in practice.
\end{remark}

\subsection{Class $\mathcal{C}_2$: imprimitive groups}

If $A[\ell]$ is an imprimitive Galois representation, with image contained in the $\mathcal{C}_2$-group $M$, then there is a block decomposition $A[\ell]=\bigoplus_{i=1}^r W_i$ with $r \geq 2$ preserved by the action of $G_\ell$. We can hence define a homomorphism $\varphi: \operatorname{Gal}(\overline{K}/K) \xrightarrow{\rho_\ell} G_\ell \subseteq M \xrightarrow{\phi} \op{Sym}(\{W_1,\ldots, W_r\})$. The first part of the following lemma enables us to deal with individual primes (namely small primes and those dividing the conductor), whilst the third part concerns the rest when $A$ is semistable.

\begin{lemma}\label{lem: primEX}
With notation as above, the following hold.
\begin{enumerate}
\item  Suppose there is a prime $v \nmid \mathfrak{N}_A\ell\infty$ such that $f_v \bmod{\ell}$ is irreducible and $\op{tr} \, \rho_\ell(\op{Fr}_v) \not\equiv 0 \bmod{\ell}$. Then $A[\ell]$ is primitive.
\item Let $\mathfrak{l} \mid \ell$ and suppose that $A$ is semistable at $\mathfrak{l}$. Recall the function $\xi(r)$ from \Cref{eq: definition of the function xi}. If $\ell > e(\mathfrak{l} \mid \ell)\xi(r) +1,$ then $\varphi$ is unramified at $\mathfrak{l}$.
\item Suppose $A/K$ is semistable, $\mathfrak{N}_A$ is coprime to $\ell$, and $\ell> \max_{\mathfrak{l} \mid \ell} e(\mathfrak{l} \mid \ell)\xi(r)+1$. Then $\varphi$ is unramified at all finite places. In particular, if $K$ has narrow class number $1$, then $A[\ell]$ is primitive.
\end{enumerate}
\end{lemma}

\begin{proof}
(1) is proved as part of \cite[Lemma 7.2]{ZywinaExample} and (3) follows from (2) and \Cref{lem: semistable phi}(2) (noting that $\xi(r)\geq r$ hence $\ell \nmid \#\op{Sym}_r$) so it remains to prove (2); this is analogous to \cite[Lemma 7.3]{ZywinaExample} accounting for ramification degree.
\end{proof}

\begin{remark}
If $A/K$ is not semistable, one can use \Cref{lem: general phi} to bound the set of imprimitive $\ell$. We briefly explain how to construct the necessary elements $h_\SingleCharacter$. Note that we may assume that $H=\op{Im} \SingleCharacter \subset \op{Sym}_r$ is a transitive subgroup (else $G_\ell$ is reducible). Burnside's lemma then implies that $H$ contains an element $h_\SingleCharacter$ without fixed points which we claim works; indeed writing the matrix of $g \in M$ in block form, it is clear that if $\phi(g) = h_\SingleCharacter$ then $\op{tr} g=0.$

\end{remark}

\subsection{Class $\mathcal{C}_3$: field extension subgroups}

Consider $A[\ell] \cong \mathbb{F}_\ell^{2g}$ as endowed with the structure of an $\mathbb{F}_{\ell^r}$-vector space for some prime $r \mid 2g$ and let $M$ be the subgroup of $\mathbb{F}_{\ell^r}$-semilinear automorphisms of $A[\ell]$ (see \Cref{def: semilinear}). In this section we assume $G_\ell \subseteq M$.
Let $M_0$ be the normal, index-$r$ subgroup of $M$ that acts $\mathbb{F}_{\ell^r}$-linearly on $A[\ell]$.
First we wish to show that the eigenvalues of every element of $M \setminus M_0$ are all of a certain shape. This is no doubt well known, but we could not find a reference so we provide a proof.

\begin{lemma}\label{lemma: semilinear action}
    Let $\ell$ be a prime, $r, d$ be positive integers, and $V=\mathbb{F}_{\ell^r}^d$. Let $G = \operatorname{Gal}(\mathbb{F}_{\ell^r}/\mathbb{F}_\ell)$, with generator $\operatorname{Frob} : x \mapsto x^\ell$, and $M$ be the group of $\mathbb{F}_{\ell^r}$-semilinear automorphisms of $V$.
    \begin{enumerate}
        \item 
    The map
\[
\begin{array}{cccc}
\Phi : & V \otimes_{\mathbb{F}_\ell} \mathbb{F}_{\ell^r} &\to & V^G := \prod_{\sigma \in G} V \\
& v \otimes \lambda & \mapsto & (v\sigma(\lambda))_{\sigma \in G}
\end{array}
\]
is a $\operatorname{GL}_d(\mathbb{F}_{\ell^r})$-equivariant, $\mathbb{F}_\ell$-linear isomorphism, where $\operatorname{GL}_d(\mathbb{F}_{\ell^r})$ acts on $V^G$ via its diagonal action.
\item The natural representation of $\operatorname{GL}_d(\mathbb{F}_{\ell^r})$ on $V \otimes_{\mathbb{F}_\ell} \mathbb{F}_{\ell^r}$ is reducible.

\item Let $M_0=\operatorname{GL}_d(\mathbb{F}_{\ell^r})$, seen as an index-$r$ subgroup of $M$, and fix $g \in M \setminus M_0$. The eigenvalues of $g$, seen as an automorphism of the $dr$-dimensional $\mathbb{F}_\ell$-vector space $V$, are of the form $\lambda_i \zeta_r^{j}$ for certain $\lambda_1, \ldots, \lambda_{d} \in \overline{\mathbb{F}}_\ell^\times$, $1 \leq i \leq d$ and $0 \leq j \leq r-1$. 
In particular, $\operatorname{tr}(g)=0$.
    \end{enumerate}
\end{lemma}

\begin{proof}
For (1), that $\Phi$ is an isomorphism follows from the case $d=1$, which is a well-known fact in Galois theory. The equivariance with respect to $\operatorname{GL}_d(\mathbb{F}_{\ell^r})$ is a simple calculation.
(2) is a consequence of (1), because $\Phi$ is an isomorphism of $\operatorname{GL}_d(\mathbb{F}_{\ell^r})$-representations, and it is clear that $V^G$ (with the diagonal action) is reducible. For (3), the normal basis theorem gives the existence of $x \in \mathbb{F}_{\ell^r}$ such that $x, \operatorname{Frob}(x), \ldots, \operatorname{Frob}^{r-1}(x)$ is an $\mathbb{F}_\ell$-basis of $\mathbb{F}_{\ell^r}$. Write $e_i$, $1 \leq i \leq d$, for the canonical basis of $\mathbb{F}_{\ell^r}^d$ as an $\mathbb{F}_{\ell^r}$-vector space. 
Let $g \in M$. The multiset of eigenvalues of $g \otimes \operatorname{id}$ on $V \otimes_{\mathbb{F}_\ell} \mathbb{F}_{\ell^r}$ is given by $r$ copies of the multiset of eigenvalues of $g$ on $V$, so it suffices to compute the former. Note that $M$ is generated by $M_0$ together with the map
\[
     \begin{array}{cccc}
     \tau : & \mathbb{F}_{\ell^r}^d & \to & \mathbb{F}_{\ell^r}^d \\
     & {}^t(x_1, \ldots, x_d) & \mapsto & {}^t(x_1^\ell,\ldots,x_d^{\ell}).
     \end{array}
\]
We start by computing the action of $\tau$. For the purposes of this calculation, we identify $G$ with $\mathbb{Z}/r\mathbb{Z}$ by sending $\operatorname{Frob}^k$ to $k$. Since the trace pairing $(u,v) \mapsto \operatorname{tr}_{\mathbb{F}_{\ell^r}/\mathbb{F}_\ell}(uv)$ is nondegenerate, there exists $y \in \mathbb{F}_{\ell^r}$ such that 
\[
\operatorname{tr}_{\mathbb{F}_{\ell^r}/\mathbb{F}_\ell}(y \operatorname{Frob}^j(x))=\begin{cases}
  1 \text{ if } j \equiv 0 \pmod{r}  \\
  0 \text{ otherwise}.
\end{cases}
\]
Let $v_{i, j, k} := \sum_{m=1}^r \operatorname{Frob}^{j}(x)  \operatorname{Frob}^m(y)e_i \otimes \operatorname{Frob}^{m-k}(x)$, for $1 \leq i \leq d$ and $j,k \in \mathbb{Z}/r\mathbb{Z}$. Then
\[
\begin{aligned}
\Phi(v_{i,j,k}) & = \left( \sum_{m=1}^r \operatorname{Frob}^{j}(x) \operatorname{Frob}^m(y) \operatorname{Frob}^{m-k+n}(x) \, e_i \right)_{n \in \mathbb{Z}/r\mathbb{Z}} \\ & =  \left( \sum_{m=1}^r \operatorname{Frob}^{j}(x) \operatorname{tr}_{\mathbb{F}_{\ell^r/\mathbb{F}_\ell}}(y \operatorname{Frob}^{n-k}(x)) \,  e_i \right)_{n \in \mathbb{Z}/r\mathbb{Z}}
\end{aligned}
\]
is the vector having component $\operatorname{Frob}^{j}(x)e_i$ along the $k$-th copy of $V$ and zero elsewhere. It is then clear that the $v_{i, j, k}$ (for $1 \leq i \leq d$ and $j,k \in \mathbb{Z}/r\mathbb{Z}$) form an $\mathbb{F}_\ell$-basis of $V \otimes_{\mathbb{F}_\ell} \mathbb{F}_{\ell^r}$. On the other hand, $\tau \otimes \operatorname{id}$ sends $v_{i, j, k}$ to $v_{i, j+1, k+1}$. Thus, for a suitable ordering of the basis vectors $\Phi(v_{i,j,k})$, the $\mathbb{F}_\ell$-linear action of $\tau \otimes \operatorname{id}$ on $V^G \cong V \otimes_{\mathbb{F}_\ell} \mathbb{F}_{\ell^r}$ is represented by a block matrix of the form
\[
\begin{pmatrix}
0 & 0 & \cdots & 0 & R \\ 
R & 0 & \cdots & 0 & 0 \\ 
0 & R & \cdots & 0 & 0 \\ 
0 & 0 & \ddots & 0 & 0 \\ 
0 & 0 & \cdots & R & 0 \\ 
\end{pmatrix}
\]
where each block $R$ is itself a $dr \times dr$ permutation matrix. On the other hand, for any $g_0 \in M_0$ the action of $g_0 \otimes \operatorname{id}$ on the basis $\Phi(v_{i,j,k})$ is given by a block-diagonal matrix $\operatorname{diag}(B, B, \ldots, B)$ for some $dr \times dr$ matrix $B$. Since every $g \in M \setminus M_0$ is of the form $g = g_0 \tau^i$ for some $i \not \equiv 0 \pmod{r}$, we conclude that the matrix representation of $g$ is a cyclic block-permutation matrix whose nonzero $dr \times dr$ blocks are all equal (and all off the diagonal). It is an easy exercise in linear algebra to prove that the characteristic polynomial of such a matrix is a polynomial in $t^r$, from which the claim follows.
\end{proof}

With this lemma in hand, we can now turn our attention to how far $G_\ell$ is from being contained in $M_0$. Note that if $G_\ell \subseteq M_0$, then \Cref{lemma: semilinear action}(2) implies that $A[\ell] \otimes \overline{\mathbb{F}}_\ell$ is reducible which is a case we have already handled.

\begin{lemma}\label{lemma: ramification of C3 character}
Let $M, M_0, r$ be as above and assume $G_\ell \subseteq M$. Consider the character
\[
\varphi : \operatorname{Gal}(\overline{K}/K) \to G_\ell \subseteq M \to M/M_0 \cong \mathbb{Z}/r\mathbb{Z}.
\]
\begin{enumerate}
\item Suppose $A$ is semistable at $\mathfrak{l}\mid \ell$. If $\ell>e(\mathfrak{l}\mid\ell)r+1$, then $\varphi$ is unramified at $\mathfrak{l}$.
\item If $A/K$ is semistable and $\ell >\max_{\mathfrak{l} \mid \ell} e(\mathfrak{l} \mid \ell)r +1$ then $\varphi$ is unramified at all finite places. If $K$ has narrow class number $1$, then $A[\ell]$ is geometrically reducible. 
\end{enumerate}
\end{lemma}

\begin{proof}
(2): \Cref{lem: semistable phi}(2) and (1) imply that $\varphi$ is everywhere unramified. Thus, if $K$ has narrow class number $1$, the map $\varphi$ is trivial and hence $A[\ell]$ is geometrically reducible by \Cref{lemma: semilinear action}(2). We now prove (1). Since $A$ is semistable, every eigenvalue of $A[\ell]$ corresponds to $\psi_{n_i}^{k_i}$ for certain positive integers $n_i, k_i$ such that $0 \leq k_i \leq e(\mathfrak{l}\mid \ell)\frac{\ell^{n_i}-1}{\ell-1}$; in particular $\varphi\mid_{I_\mathfrak{l}}$ factors through the tame inertia group.
Fix a choice of lift of a generator of tame inertia $x$. If $\varphi(x) \neq 1$, then by \Cref{lemma: semilinear action}(3) there exist two distinct eigenvalues  $\mu_1=\psi_{n_1}^{k_1}(x)$ and $\mu_2=\psi_{n_2}^{k_2}(x)$ of $\rho_\ell(x)$ such that $\mu_1^r=\mu_2^r$. Since $x$ is a generator, this is true for the characters hence $\psi_{n_1}^{rk_1}=\psi_{n_2}^{rk_2}$. Letting $n=\op{lcm}(n_1,n_2)$ we can rewrite this as $\psi_n^{rk_1\frac{\ell^n-1}{\ell^{n_1}-1}} = \psi_n^{rk_2\frac{\ell^n-1}{\ell^{n_2}-1}}$. Since $\psi_n$ has order $\ell^n-1$ and $\ell> re(\mathfrak{l}\mid \ell)+1$, this forces $rk_1\frac{\ell^n-1}{\ell^{n_1}-1}=rk_2\frac{\ell^n-1}{\ell^{n_2}-1}$ which is a contradiction since $\mu_1, \mu_2$ are distinct.
\end{proof}

\begin{remark}\label{rmk: conclusion in C3 case}
Similarly to the primitive case, we can use \Cref{lem: general phi} when $A/K$ is not semistable. In this setting, letting $\phi : M \to M/M_0$ be the quotient map, one can take $h_\SingleCharacter$ to be any nontrivial element of $\mathbb{Z}/r\mathbb{Z}$; the fact that $\phi(g)=h_\SingleCharacter$ implies $\operatorname{tr} g=0$ follows from \Cref{lemma: semilinear action}(3).
\end{remark}

\subsection{Class $\mathcal{C}_4$: tensor product groups}

Suppose that $G_\ell$ is contained in a maximal subgroup of class $\mathcal{C}_4$.
Let $v$ be a place of good reduction of $A$ such that $f_v(x)$ has no repeated roots. The proof of \Cref{prop:tensor products} shows that $\op{Fr}_v$ has distinct eigenvalues $\lambda_1, \lambda_2, \lambda_3, \lambda_4$ satisfying $N_{\mathbb{Q}(v)/\mathbb{Q}}(\lambda_1^2-\lambda_2\lambda_3) \equiv 0 \pmod{\ell}$ or $N_{\mathbb{Q}(v)/\mathbb{Q}}(\lambda_1\lambda_2-\lambda_3\lambda_4) \equiv 0 \pmod{\ell}$ (in the latter case, $\lambda_1 \lambda_2 \neq \lambda_3 \lambda_4$ as elements of $\overline{\mathbb{Q}}^\times$). Thus, for each $v$ of good reduction we obtain
\begin{equation}\label{eq: divisibility relation for tensor product case}
    \ell \mid p_v \cdot \prod_{\substack{\lambda_1, \lambda_2,\lambda_3 \\ \text{distinct}}} N_{\mathbb{Q}(v)/\mathbb{Q}}(\lambda_1^2-\lambda_2\lambda_3)  \cdot \prod_{\substack{\lambda_1, \lambda_2,\lambda_3, \lambda_4 \\ \text{distinct} \\ \lambda_1\lambda_2 \neq \lambda_3\lambda_4}} N_{\mathbb{Q}(v)/\mathbb{Q}}(\lambda_1\lambda_2-\lambda_3\lambda_4),
\end{equation}
where each product ranges over the roots $\lambda_i$ of $f_v(x)$ with the given properties. If we assume that the roots of $f_v(x)$ are multiplicatively independent (which is true for a density-1 set of places $v$), the right-hand side of \Cref{eq: divisibility relation for tensor product case} is nonzero, hence we get a divisibility bound on $\ell$.

\subsection{Class $\mathcal{S}$, constant groups}\label{subsect: example constant groups}
Suppose that $G_\ell$ is contained in a maximal subgroup of class $\mathcal{S}$ `of constant type'. We wish to bound $\ell$.
For simplicity of exposition we focus on the case $\dim A = 3$, but the arguments below extend in principle to any dimension, since all we need is an upper bound on the size (or even just exponent) of such groups, which we can get for example from \Cref{prop_UnconditionalConstantGroups}.
The proof of \cite[Theorem 6.6(1)]{MR4695874} gives the following.

\begin{theorem}\label{thm: lower bound exponent projective image}
Suppose $A/K$ is semistable at $\mathfrak{l} \mid \ell$. If $\ell > 2e(\mathfrak{l} \mid \ell)$, then the exponent of the group $\mathbb{P}G_\ell$ is at least $\frac{\ell-1}{e(\mathfrak{l} \mid \ell)}$.
\end{theorem}

Note that any abelian variety $A/K$ has semistable reduction at all but finitely many primes, and that the inequality $\ell > 2e(\mathfrak{l} \mid \ell)$ is satisfied for example by all primes $\ell>2$ that are unramified in $K$ (so in particular by all but finitely many primes).

\begin{remark}
Note that we can drop the semistability assumption and get a lower bound on the exponent of $\mathbb{P}G_\ell$ that only depends on $\ell$ and not the choice of prime $\mathfrak{l}\mid \ell$. Indeed it is well known \cite[Exposé IX, Proposition 4.7]{MR354656} that $A$ becomes everywhere semistable over an extension $L/K$ of degree bounded by $\#\op{GSp}_{2g}(\mathbb{Z}/12\mathbb{Z)}$. Let $\mathfrak{l}$ be a prime of $L$ above $\ell$. Since $A$ is semistable and  $e(\mathfrak{l} \mid \ell) \leq [L:\mathbb{Q}] \leq \#\op{GSp}_{2g}(\mathbb{Z}/12\mathbb{Z}) \cdot [K:\mathbb{Q}]$, applying \Cref{thm: lower bound exponent projective image} to $A_L$ we obtain the uniform lower bound $\left\lfloor \frac{\ell-1}{\#\op{GSp}_{2g}(\mathbb{Z}/12\mathbb{Z}) \cdot [K:\mathbb{Q}]} \right\rfloor$ for the exponent of $\mathbb{P}G_\ell$.
\end{remark}

Fix a prime $\ell$ to which \Cref{thm: lower bound exponent projective image} applies.
Let $E$ be a class $\mathcal{S}$, constant subgroup of $\operatorname{GSp}_6(\mathbb{F}_\ell)$. The order of $\mathbb{P}E$ is at most $1 \, 209 \,600$, corresponding to the entry $2^{\boldsymbol{\cdot}}J_2$ in \cite[Table 8.29]{MR3098485}. Note that the maximal subgroup $2^{\boldsymbol{\cdot}}J_2$ of $\operatorname{Sp}_6(\mathbb{F}_\ell)$ extends to a subgroup $E$ of $\operatorname{GSp}_6(\mathbb{F}_\ell)$ of order $(\ell-1) \cdot 2 \cdot \#J_2$; the projective image of $E$ has order $2 \cdot \#J_2$. The inequality $\#\mathbb{P}G_\ell \geq \frac{\ell-1}{e(\mathfrak{l} \mid \ell)}$  gives $\ell \leq 2e(\mathfrak{l} \mid \ell) \cdot \#J_2+1$. 
While this bound may seem large, we have a quick test to eliminate possible subgroups $E$ for a \textit{fixed} prime $\ell$.

\begin{lemma}
Let $v \nmid \ell$ be a place of good reduction for $A/K$ and let $\overline{\mu}_1, \overline{\mu}_2 \in \overline{\mathbb{F}}_\ell^{\times}$ be distinct roots of the reduction of the characteristic polynomial of $\op{Fr}_v$. Let $N$ be the multiplicative order of $\overline{\mu}_1/\overline{\mu}_2$. If $G_\ell \subset E$, then $N$ divides the exponent of $\mathbb{P}E$.
\end{lemma}

\begin{proof}
If $G_\ell \subseteq E$, then $\mathbb{P}G_\ell \subseteq \mathbb{P}E$ so $g^m$ is scalar for all $g \in G_\ell$, where $m$ is the exponent of $\mathbb{P}E$. In particular, if $\lambda_1, \lambda_2$ are any two eigenvalues of $g$ then $(\lambda_1/\lambda_2)^m=1$. 
\end{proof}

For $g=3$, one sees from \cite[Table 8.29]{MR3098485} that the prime factors of $\#\mathbb{P}E$ are contained in the set $P := \{2,3,5,7,13\}$. Thus, it suffices to find a suitable place $v$ such that the corresponding $N$ has a prime factor outside $P$ to show that $G_\ell$ is not contained in \textit{any} maximal constant group in class $\mathcal{S}$.

\begin{remark}\label{rmk: on ruling out the constant groups}
\begin{enumerate}
\item One can rephrase the above by saying that if $G_\ell \subseteq E$, then the polynomials $x^{\op{exp} \mathbb{P}E}-1$ and 
        \[
        h_p(x) = \prod_{\substack{\lambda_1, \lambda_2 \in \overline{\mathbb{Q}}^\times \\ f_v(\lambda_1)=f_v(\lambda_2)=0 \\ \lambda_1 \neq \lambda_2}} (x - \lambda_1/\lambda_2)
        \]
        have a common root modulo $\ell$, and therefore the resultant of $x^{\op{exp} \mathbb{P}E}-1$ and $h_p(x)$ (a rational number) has positive $\ell$-adic valuation. This gives a divisibility bound on $\ell$. However, since $\op{exp} \mathbb{P}E$ is very large, this method is hard to implement in practice.
        \item The method described above cannot be used to rule out that $G_3$ is contained in a maximal subgroup of class $\mathcal{S}$, because the only prime factors of $\#\operatorname{GSp}_6(\mathbb{F}_3)$ are $2, 3, 5, 7, 13$. To handle this case, one can explicitly compute the only maximal class $\mathcal{S}$ subgroup $E$ of $\operatorname{GSp}_6(\mathbb{F}_3)$ and list the characteristic polynomials of its elements (there are 8 of them). Finding a single place $v$ such that $f_v(x) \bmod 3$ is not in this list shows that $G_3 \not \subseteq E$.
        \item Suppose that $A$ admits a place $v$ of bad semistable reduction with Tamagawa number $c_v$. Let $\ell$ be a prime not dividing $p_vc_v$. Well known arguments of Grothendieck (spelled out in \cite[end of the proof of Theorem 10.7]{MR3576113}) imply that an inertia group at $v$ acts on $A[\ell]$ via a unipotent operator that is nontrivial modulo $\ell$. In particular, this shows that $\mathbb{P}G_\ell$ contains nontrivial elements of $\ell$-power order, which rules out all the constant groups for $\ell > 13$ and $\ell \nmid c_vp_v$. The few remaining primes can then be handled as explained above. Note however that $A$ may have (potentially) good reduction everywhere, so this method -- while often applicable -- does not always give a divisibility bound on $\ell$.
    \end{enumerate}
\end{remark}

\subsection{Class $\mathcal{S}$, groups of Lie type}
For $g=3$ there is only one such maximal subgroup $M \subset \operatorname{GSp}_6(\mathbb{F}_\ell)$ up to conjugacy, namely, the subgroup generated by the scalars and by the image of $\operatorname{GL}_2(\mathbb{F}_\ell)$ acting via the fifth power of its standard representation. In this case, the eigenvalues of every element in $M$ are of the form $\{\lambda^i\mu^j : i+j=5\}$ for certain $\lambda, \mu \in \overline{\mathbb{F}}_\ell^\times$. In particular, if $v$ is a place of good reduction, there exist distinct eigenvalues $\lambda_1=\lambda^4\mu, \lambda_2=\lambda^5, \lambda_3=\lambda^3\mu^2$ of $\op{Fr}_v$ with $\lambda_2^2=\lambda_1\lambda_3$. We obtain 
\begin{equation}\label{eq: divisibility for groups of Lie type}
\ell \mid p_v  \cdot \prod_{\substack{\lambda_1, \lambda_2,\lambda_3 \\ \text{distinct}}} N_{\mathbb{Q}(v)/\mathbb{Q}}(\lambda_1^2-\lambda_2\lambda_3),    
\end{equation}
a sharper divisibility bound than the one we have for groups of class $\mathcal{C}_4$.
\begin{remark}
    If $\ell$ is a place of semistable reduction, one could also argue as in \cite[\S3.6]{MR1969642}, which considers (in the case $g=2$) the third power of the standard representation of $\operatorname{GL}_2(\mathbb{F}_\ell)$.
\end{remark}

For general $g$, we merely note that the arguments of \Cref{sect_LieGroups} give an effective divisibility bound in all cases.

\subsection{Classes $\mathcal{C}_6$ and $\mathcal{C}_7$}
With reference to \Cref{thm_Aschbacher}, for $g=3$ there are no such maximal subgroups of $\operatorname{GSp}_{2g}(\mathbb{F}_\ell)$. For general $g$, we briefly note that maximal subgroups in these classes can be treated similarly to tensor product groups (class $\mathcal{C}_7$) or constant groups in class $\mathcal{S}$ (class $\mathcal{C}_6$). In particular, going through the proofs of \Cref{prop_NoELargerThan1,prop: NoInducedTensors rewrite} one obtains a divisibility bound for those primes $\ell$ for which $G_\ell$ is contained in a subgroup of class $\mathcal{C}_7$.

\section{Example}\label{sect: Example}

Zywina \cite{ZywinaExample} has given an example of a three-dimensional Jacobian having maximal (adelic) Galois action, his approach consisting essentially in making effective a previous paper by Hall \cite{MR2820155} (see also the related work \cite{MR3501014}). Effective results based on Hall's techniques have also been obtained in \cite{MR3596606}, where a sufficient condition is given that implies that the equality $G_\ell=\operatorname{GSp}_{2g}(\mathbb{F}_\ell)$ holds for a given abelian variety and a fixed prime $\ell$. 
These results however assume that there is some place $v$ of $K$ where $A$ has potential toric rank $1$ at $v$ in order to force the existence of a transvection which eliminates the possibility of several classes of maximal subgroups. Our example is chosen precisely so as \textit{not} to satisfy this condition.
We focus on the genus $3$ hyperelliptic curve 
\begin{equation}\label{eq: example curve}
C: y^2+(x^4 + x^3 + x)y = -2x^6 - 4x^5 + 3x^4 + 6x^3 - 5x^2 - 4x + 3
\end{equation}
with Jacobian $J$. The curve $C$ appears in a database compiled by A.~Sutherland using the methods of \cite{MR3540958}. Using our methods, we shall show that the $\ell$-adic representation $\rho_{J,\ell^\infty}$ is surjective unless $\ell=2,7$. Using \Cref{prop: properties of example}(5) below and $q_v=2$, the bound of \Cref{thm_MainProof} gives surjectivity for all $\ell > 10^{1733}$ but we now refine this using the previous section. 

\begin{proposition}\label{prop: properties of example}
\begin{enumerate}
\item The torsion subgroup of $J(\mathbb{Q})$ is isomorphic to $\mathbb{Z}/14\mathbb{Z}$;
\item $J$ has good reduction away from $p_0=257$;
\item $J/\mathbb{Q}_{257}$ is semistable with toric rank $2$, conductor $257^2$ and Tamagawa number $1$;
\item $\op{End}_{\overline{\mathbb{Q}}} J = \mathbb{Z}$;
\item $h(J) \approx -2.2595232800...$
\end{enumerate}
\end{proposition}

\begin{proof}
\begin{enumerate}
\item This follows from the methods of \cite{MR4563690} with the corresponding code.
\item  The affine model has discriminant $257^2$ so $C$ has good reduction away from $257$.
\item  We change models to $y^2= x^8 + 2x^7 - 7x^6 - 14x^5 + 14x^4 + 24x^3 - 19x^2 - 16x + 12$. The statements then follow from \cite[Theorems 5.6, 12.1 and 10.1]{MR4453743} using the following cluster picture.

\begin{center}	
\clusterpicture
\Root(0.0,2)A(a1);     
\Root(0.4,2)A(a2);     
\Root(1.1,2)A(a3);      
\Root(1.5,2)A(a4);
\Root(2.2,2)A(a5);
\Root(2.6,2)A(a6);
\Root(3.0,2)A(a7);
\Root(3.4,2)A(a8);
\ClusterL(c1){(a1)(a2)}{$\tfrac{1}{2}$};
\ClusterL(c2){(a3)(a4)}{$\tfrac{1}{2}$};
\ClusterL(c3){(c1)(c2)(a5)(a6)(a7)(a8)}{$0$};  
\endclusterpicture			
\end{center}

\item The methods of \cite{MR3882288, MR3904148, MR4280568} show that $\operatorname{Jac}(C)$ is geometrically simple with trivial endomorphism ring. Note that since $\operatorname{Gal}(f(x)) \cong \operatorname{Sym}_3 \times C_4$ we cannot apply the criterion of \cite{MR1748293}.

\item This is a simple computation using \cite[Theorem 2.3]{MR3896852} and observing that the non-negative integer $e_{257}$ of \textit{loc.~cit.}~is necessarily zero, since (as part of the statement, using $v_{257}(\Delta_{\min}) \leq 2$) we have $2 \cdot 3 - (8 \cdot 3+4)e_{257} \geq 0$.
\end{enumerate}
\end{proof}

We now rule out the maximal subgroups. 

\begin{proposition}\label{prop: example is irreducible and primitive}
If $\ell\neq 2,7$, then $G_\ell$ is geometrically irreducible and primitive.
\end{proposition}

\begin{proof}
We first show irreducibility. Suppose not and let $W$ be a Jordan--H\"{o}lder factor of $J[\ell]$, which we may suppose to have dimension at most $3$. By \Cref{lemma: JHdet}(3) and \Cref{rmk: det}, $W$ has determinant $\chi_\ell^r$ for some $0 \leq r \leq \frac{1}{2}\dim W$. Taking $v \in \{2,3,7\}$, we use \Cref{lem: JHelim} to eliminate all possibilities except $\dim W=2$ and $\det W=\chi_\ell$.
To eliminate the remaining case, we use \Cref{lemma: eliminating two dimensional JH factors}. The only nontrivial divisor of $N_J$ is $p_0$, and taking $p=2$ and $p=3$ gives the desired conclusion.

Finally we show primitivity. Since $J$ is semistable, \Cref{lem: primEX}(3) reduces us to checking the primes $\ell=3,5$. In these cases, we apply \Cref{lem: primEX}(1) with $v=13, 151$.
\end{proof}

\begin{proposition}\label{prop: example no C3}
    If $\ell \neq 2,7$, then $G_\ell$ is not contained in a maximal subgroup of $\operatorname{GSp}_6(\mathbb{F}_\ell)$ of class $\mathcal{C}_3$.
\end{proposition}
\begin{proof}
Let $\ell \neq 2, 3, 7$ and suppose by contradiction that $G_\ell \subseteq M$, where $M$ acts $\mathbb{F}_{\ell^r}$-semilinearly on $J[\ell]$ for some prime $r \mid 2g=6$ (so $r\in\{2,3\}$).
    As $J$ is semistable, applying \Cref{lemma: ramification of C3 character}(2) we find that $A[\ell]$ is geometrically reducible, 
    contradicting \Cref{prop: example is irreducible and primitive}. 

    It remains only to consider $\ell=3$. If the character $\varphi$ defined in \Cref{lemma: ramification of C3 character} is trivial, $A[\ell]$ is geometrically reducible by \Cref{lemma: semilinear action}(2) and we conclude as above. Otherwise, $\varphi$ has order $2$ or $3$ and is ramified at most at $3$, so basic algebraic number theory shows that its kernel corresponds to $\mathbb{Q}(\sqrt{-3})$ or $\mathbb{Q}(\zeta_9)^+$. Any prime $p \equiv 5 \pmod{9}$ then satisfies $\SingleCharacter(\operatorname{Fr}_p) \neq 1$ for each of the relevant characters $\SingleCharacter$. Applying \Cref{lem: general phi}     
    with $v_\SingleCharacter=23$ for all $\SingleCharacter$ (the assumptions are satisfied by \Cref{rmk: conclusion in C3 case}), and then again with $v_\SingleCharacter=41$ for all $\SingleCharacter$, gives the conclusion.
\end{proof}

\begin{theorem}\label{thm: example}
Let $J$ be the Jacobian of the curve $C$ given in \eqref{eq: example curve}. Then $G_{\ell^\infty} \neq \op{GSp}_6 (\mathbb{Z}_\ell)$ if and only if $\ell \in \{2,7\}$.
\end{theorem}

\begin{proof}
By \Cref{cor_Conclusion} it suffices to show that $G_\ell \neq \op{GSp}_6 (\mathbb{F}_\ell)$ if and only if $\ell \in \{2,7\}$.
If $\ell=2, 7$, this follows from \Cref{prop: properties of example}(1). Let $\ell$ be any other prime and suppose that $G_\ell$ is contained in a maximal subgroup $M$ of $\operatorname{GSp}_6(\mathbb{F}_\ell)$. Recall that these are classified in \Cref{thm_Aschbacher} (note that $\det(G_\ell)=\mathbb{F}_\ell^\times$ since the $\bmod{\, \ell}$ cyclotomic character over $\mathbb{Q}$ is surjective for all primes $\ell$). By \Cref{prop: example is irreducible and primitive,prop: example no C3}, $M$ is not of class $\mathcal{C}_1$, $\mathcal{C}_2$ or $\mathcal{C}_3$. To rule out class $\mathcal{C}_4$ we apply \eqref{eq: divisibility relation for tensor product case} with $v=2, 3$. The sentence after \Cref{eq: divisibility for groups of Lie type} implies that $M$ cannot be the unique (up to conjugacy) maximal subgroup of class $\mathcal{S}$ of Lie type. Since $2\dim J$ is not a perfect power, the classes $\mathcal{C}_6$ and $\mathcal{C}_7$ cannot arise. It remains to consider the constant groups in class $\mathcal{S}$, which we handle as explained in \Cref{subsect: example constant groups}.
\end{proof}

\begin{remark}
    Combining \Cref{rmk: on ruling out the constant groups}(3) with \Cref{prop: properties of example}(3) rules out the case of constant groups of class $\mathcal{S}$ for all $\ell > 13$, $\ell \neq p_0$. However, the test described in \Cref{subsect: example constant groups} is fast enough that in our example we could test all primes $\ell$ up to $10^7$ with this method in a reasonable amount of time. For the vast majority of primes $\ell$, using $v=p=2$ was enough.
\end{remark}

\bibliography{Biblio}{}
\bibliographystyle{halpha-abbrv}

\end{document}